\newtheorem{thm}{Theorem}[section]
\newtheorem{lemma}[thm]{Lemma}
\newtheorem{prop}[thm]{Proposition}
\theoremstyle{remark}
\newtheorem{rem}[thm]{Remark}
\theoremstyle{theorem}
\theoremstyle{definition}
\newtheorem{defn}[thm]{Definition}
\newcommand{\exo}{\Sigma}
\newcommand{\mR}{{\mathbb R}}
\newcommand{\mZ}{{\mathbb Z}}
\newcommand{\cD}{{\mathcal D}}
\newcommand{\cL}{{\mathcal L}}
\newcommand{\cM}{{\mathcal M}}
\newcommand{\Met}{\mathcal{MET}}
\newcommand{\cT}{{\mathcal T}}
\newcommand{\lra}{\longrightarrow}
\newcommand{\bgd}{\begin{displaymath}}
\newcommand{\edd}{\end{displaymath}}
\begin{document}

\title[Teichm\"uller space of Complex Hyperbolic Manifolds]{Teichm\"uller space of negatively curved metrics on Complex Hyperbolic Manifolds is not contractible}

\author{F. T. Farrell}
\address{Yau Mathematical Sciences Center and Department of Mathematics, Tsinghua University, Beijing, China}
\email{farrell@math.tsinghua.edu.cn}
 
\author{G. Sorcar}
\address{Department of Mathematics, Ohio State University, Columbus, OH 43210}
\email{sorcar.1@osu.edu}

\date{}
 
\keywords{}

\maketitle

\begin{abstract}
In this paper we prove that for all $n=4k-2$, $k\ge2$ there exists a closed smooth complex hyperbolic manifold $M$ with real dimension $n$
having non-trivial $\pi_1(\cT^{<0}(M))$. $\cT^{<0}(M)$
denotes the Teichm\"uller space of all negatively curved Riemannian metrics on $M$, which is the topological quotient of the space of all 
negatively curved metrics modulo the space of self-diffeomorphisms of $M$ that are homotopic to the identity.
\end{abstract}

\tableofcontents

\section{Introduction}\label{sec-intro}

This paper builds on arguments from the paper \cite{SOR} that proves a similar result for Gromov-Thurston manifolds that support negatively curved
Riemannian metrics but are not hyperbolic. Let us recall some terminology from that paper:
\begin{itemize}
\item $\Met(M)$ denotes the space of all Riemannian metrics on $M$ with smooth topology. [Note that $\Met(M)$ is contractible. 
Any two metrics can be joined by a line segment in the space of metrics, as the convex combination of two metrics is also a metric.]
\item Diff$(M)$ is the group of all smooth self-diffeomorphisms of $M$. $\text{Diff}(M)$ acts on $\Met(M)$ by pushing forward metrics, 
that is, for any $f$ in Diff$(M)$ and any metric $g$ in $\Met(M)$, $f_*g$ is the metric such that $f:(M,g)\rightarrow(M,f_*g)$ is an isometry.
\item $\text{Diff}_0(M)$ stands for the subgroup of Diff$(M)$ consisting of all smooth self diffeomorphisms of the manifold $M$ 
that are homotopic to the identity. 
\item $\cD_0(M)$ is the group $\mR^+\times$ Diff$_0(M)$. 

\item $\cD_0(M)$ acts on $\Met(M)$ by scaling and pushing forward metrics, that is, when $(\lambda,f) \in \cD_0(M)$ and $g \in$ $\Met(M)$, 
$(\lambda,f)g=\lambda(f)_*g$. The Teichm\"uller space of all metrics on $M$ is defined to be $\cT(M):=\Met(M)/\cD_0(M)$.
\item Similarly, the Teichm\"uller space of negatively curved metrics on $M$ is defined as $\cT^{<0}(M):=\Met^{<0}(M)/\cD_0(M)$.
\end{itemize}

In this paper we prove:
\begin{thm}\label{mainthm}
For every positive integer $n=4k-2$ where $k$ is an integer more than $1$, there is a complex hyperbolic manifold $M^n$
such that $\pi_1(\cT^{<0}(M^n)) \ne 0$. Therefore $\cT^{<0}(M)$ is not contractible. 
\end{thm}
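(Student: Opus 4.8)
The plan is to adapt the machinery of \cite{SOR} to the complex hyperbolic setting, the central tool being a ``two-variable'' construction of a map from a sphere into $\cT^{<0}(M^n)$ detected by a secondary invariant. First I would, for $n=4k-2$ with $k\ge 2$, choose a closed complex hyperbolic manifold $M^n$ that admits a ``large enough'' family of negatively curved deformations; the key structural input is that $M$ fibers (or virtually fibers, after passing to a finite cover by Borel--Wang or a Kottke--Millson--type arithmetic argument) over a base in a way that lets one plumb in exotic structures. Concretely, one produces a smooth bundle $E \to S^1$ with fiber $M$, classified by an element of $\pi_0(\mathrm{Diff}_0(M))$ or, more precisely, a loop in $\mathrm{Diff}_0(M)$, together with a fiberwise negatively curved metric; this is possible because the hyperbolic metric on $M$ can be perturbed along the pseudo-isotopy coming from a nontrivial element of a Whitehead or $K$-theoretic group.

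The core of the argument is then to show this loop is nontrivial in $\pi_1(\cT^{<0}(M))$. Here I would set up the standard obstruction: a fiberwise negatively curved metric on $E \to S^1$ whose total space, viewed as a smooth manifold, carries a tangential structure whose Pontryagin-type (or Hirzebruch $\widehat A$- / higher Reidemeister torsion) invariant is nonzero. The nontriviality is detected by pairing the associated characteristic class with the fundamental class, using that for $n=4k-2$ the relevant degree matches: a class in $H^{4k}$ or an odd higher-torsion class in $H^{4k-1}$ lands in a nonzero group precisely in these dimensions. This is exactly the numerical constraint forcing $n=4k-2$, $k\ge 2$. One must check that the forgetful map $\pi_1(\cT^{<0}(M)) \to \pi_1(\cT(M))$ or into the relevant bundle-theoretic moduli does not kill the class; since $\cT(M)$ and $\Met(M)$ are contractible, the obstruction lives genuinely in the negatively curved stratum and is seen via the smooth structure on the total space, not the metric alone.

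The main obstacle I expect is twofold. First, one must guarantee that the perturbed metrics stay negatively curved along the entire loop: the hyperbolic metric on $M$ is rigid (Mostow), so the deformation cannot be through Einstein or locally symmetric metrics, and one needs an explicit curvature estimate showing that a sufficiently small $C^2$-perturbation supported near a pseudo-isotopy remains pinched negative. This typically requires that the diffeomorphism realizing the loop be supported in a ball (or a product region $D^{n} \times I$) where one has room to interpolate, which in turn uses Farrell--Jones-type surgery-theoretic input about $\mathrm{Diff}_0(M)$ and the vanishing/non-vanishing of appropriate Whitehead spaces $\mathrm{Wh}(\pi_1 M)$. Second, one must verify the detecting invariant is well-defined on $\cT^{<0}$ rather than on $\Met^{<0}$, i.e., that it is invariant under the $\cD_0(M)$-action (scaling and isotopically trivial diffeomorphisms); this follows because characteristic classes and higher torsion are diffeomorphism invariants and are unchanged by scaling, but the bookkeeping of basepoints and the $\mathbb{R}^+$-factor must be handled with care.

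\textbf{Assembly.}
Combining these: the loop $\gamma$ in $\cT^{<0}(M^n)$ built from the nontrivial pseudo-isotopy has a nonzero higher-torsion (or Pontryagin) invariant, so $\gamma$ is not null-homotopic, whence $\pi_1(\cT^{<0}(M^n))\ne 0$. Since a contractible space has trivial fundamental group, $\cT^{<0}(M^n)$ is not contractible, proving Theorem~\ref{mainthm}. The passage from \cite{SOR}'s Gromov--Thurston manifolds to genuine complex hyperbolic manifolds is where the real novelty lies: one must exhibit complex hyperbolic $M^n$ in every dimension $n=4k-2$, $k\ge 2$, that both admit the requisite smooth flexibility and for which the invariant survives — likely by taking arithmetic complex hyperbolic manifolds with enough cohomology (e.g. with a totally geodesic codimension-two complex submanifold, or built via \cite{} congruence-cover techniques) so the characteristic number is computably nonzero.
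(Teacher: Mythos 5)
Your outline captures the right skeleton — produce a nontrivial class detected by a smooth/characteristic-number invariant and connect $g$ to $f_*g$ through negatively curved metrics, which in the paper is organized via the fibration $\cD_0(M)\to\Met^{<0}(M)\to\cT^{<0}(M)$ and its exact sequence — but the engine you propose for the nontrivial element would fail. You want the loop (equivalently the bundle over $S^1$) to come from ``a nontrivial element of a Whitehead or $K$-theoretic group''; for a closed complex hyperbolic manifold, $\mathrm{Wh}(\pi_1 M)=0$ by Farrell--Jones, so there is no such input. The paper instead builds $f\in\mathrm{Diff}_0(M)$ by hand from an exotic sphere $\Sigma^{4k-1}\in\Theta^{4k-1}(\partial\pi)$ (Kervaire--Milnor): the gluing diffeomorphism of $\Sigma$ is converted into a level-preserving diffeomorphism $h$ of $S^{4k-3}\times[1,2]$ and inserted into a geodesic annulus of $M$. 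Nontriviality of $[f]$ in $\pi_0(\mathrm{Diff}_0(M))$ is then proved by contradiction: an isotopy $f\simeq id_M$ would give a diffeomorphism $(M\times S^1)\#\Sigma\cong M\times S^1$ pseudo-isotopic to the identity (using topological rigidity \cite{FJ1}), hence a closed smooth $4k$-manifold $\mathcal{M}=(T\times[0,1]\,\#_b\,W)_{\mathcal F}$ ($T=M\times S^1$, $W$ parallelizable with $\partial W=\Sigma$) whose signature equals $\sigma(W)$ and also, by Hirzebruch, equals $s_k\langle p_k(\mathcal M),[\mathcal M]\rangle$; choosing $\Sigma=\Sigma_i$ with $i$ prime to a suitable prime $p>2k+1$ dividing both $t_k$ and the numerator of $s_k$ contradicts the Kervaire--Milnor congruence $\sigma(W)/8\equiv i\ (\mathrm{mod}\ t_k)$. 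None of this appears in your sketch, and it is the heart of the proof.

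Two further gaps. First, the genuinely new difficulty in the complex hyperbolic case is that $M$ has nonvanishing Pontryagin classes, so one must show that all the non-leading Pontryagin numbers of $\mathcal M$ vanish; the paper does this by transporting them (Mayer--Vietoris plus Novikov's topological invariance) to $A=M\times S^1\times S^1=\partial(M\times D^2\times S^1)$, which bounds, while the contributions from $W$ vanish by parallelizability. Your proposal never confronts this and instead gestures at higher torsion or at arithmetic manifolds ``with enough cohomology'' (or totally geodesic submanifolds, or fibering over a base), none of which is used or needed. Second, on the metric side the basepoint cannot be the complex hyperbolic metric with a small $C^2$-perturbation: the Farrell--Ontaneda tapering argument that joins $g$ and $f_*g$ inside $\Met^{<0}(M)$ requires the twist to sit in a ball isometric to a \emph{real} hyperbolic ball of radius $3\alpha$, which is why the paper passes to a finite cover and uses the special metric $g_s$ of \cite{FJ3}, real hyperbolic on a large ball and complex hyperbolic far away. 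With the exotic-sphere construction, the signature/Kervaire--Milnor detection, and the $g_s$-based tapering supplied, your outline would essentially become the paper's argument.
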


\begin{rem} At the end of section $4$ of this paper we construct $M$ and we also describe a very specific negatively curved metric
$g_s$ on $M$ with certain geometric properties.
 The basepoint in $\cT^{<0}(M)$ for $\pi_1(\cT^{<0}(M))$ is the equivalence class of this metric $g_s$ on $M$. This equivalence class is 
 well defined because of Mostow's Strong Rigidity Theorem \cite{MOS}.
\end{rem}

{\bf {Idea of proof:}} Suppose $M$ is our complex hyperbolic manifold. Consider the sequence:\\
\bgd
 \cD_0(M) \lra \Met^{<0}(M) \lra \frac{\Met^{<0}(M)}{\cD_0(M)}=:\cT^{<0}(M).
\edd

By the work of Borel, Conner and Raymond \cite{BCR} one gets that $\cD_0(M)$ acts freely on $\Met(M)$ and more details on this can be found in page 51 of 
\cite{FO1}. Then by using Ebin's Slice Theorem one can deduce that the above sequence is a fibration.

Hence from the above fibration we get a long exact sequence in homotopy, part of which we use and is shown below:
\bgd
\pi_1(\Met^{<0}(M)) \lra \pi_1(\cT^{<0}(M)) \lra \pi_0(\cD_0(M)) \lra \pi_0(\Met^{<0}(M)).
\edd

The basepoint for $\Met^{<0}(M)$ is the negatively curved metric $g_s$ on $M$ that we describe in section $4$ and
the basepoint for $\cD_0(M)$ is the identity map from $M$ to itself.

We want to come up with a $y \ne 0$ in $\pi_1(\cT^{<0}M)$. To this end, we shall first construct in section $2$ an 
$f \in \cD_0(M)$ by changing the identity map on $M$ in a strategically placed geodesic annulus in $M$. This annulus is located 
using properties of the metric $g_s$ and the change in the identity map is brought about by using exotic spheres in a dimension 
higher than that of $M$. 

Then in section $3$ we show that
$[f] \ne 0$ in $\pi_0(\cD_0(M))$ using properties of the exotic sphere we used to construct $f$. In section $4$ we proceed to prove that 
$[f]$ maps to zero in $\pi_0(\Met^{<0}(M))$ using the strategic placement of the annulus in $M$ and properties of $g_s$. 
Since $[f]$ maps to zero, it can be pulled back in $\pi_1(\cT^{<0}(M))$, and since the pullback of a non-zero element cannot be zero,
this pullback serves as the $y\ne0$ in $\pi_1(\cT^{<0}(M))$ we are looking for. 

The work involved in showing that the $[f] \ne 0$ in $\pi_0(\cD_0(M))$ we construct in section 3.2 maps to zero in $\pi_0(\Met^{<0}(M))$ is basically 
exactly the same as in \cite{FO1}, and is done by tapering constructions of metrics. This has been elaborated in section 3.3.

Another thing worth noting is that eventually we shall only construct this $[f]\in \pi_0(\text{Diff}_0(M))$ and 
follow through with the above idea. This is fine because $\text{Diff}_0(M)$ is a deformation retract of $\cD_0(M)$.

\section{Construction of the diffeomorphism}\label{sec:construct}
An exotic $n$-sphere $\Sigma^n$ is an $n$-dimensional smooth manifold that is homeomorphic to $S^n$ but not diffeomorphic to $S^n$. $\Sigma^n$
is a twisted double of two copies of $D^n$ joined by an orientation preserving diffeomorphism of the boundary of $D^n$ denoted by $\partial D^n =
S^{n-1}$. Let this orientation preserving diffeomorphism be $\gamma_1:S^{n-1} \rightarrow S^{n-1}$. 

Let $P_1$ and $P_2$ be two antipodal points in $S^{n-1}$ and let $N(P_1)$ and $N(P_2)$ be some neighborhood of $P_1$ and $P_2$ respectively in $S^{n-1}$
with radius of the neighborhoods small enough so that $N(P_1)\cap N(P_2)=\varnothing$ and consequently $\gamma_1(N(P_1))\cap \gamma_1(N(P_2))=\varnothing$.

One can also show that this diffeomorphism $\gamma_1$ is smoothly isotopic to a 
diffeomorphism $\gamma_2:S^{n-1} \rightarrow S^{n-1}$  such that $\gamma_2$ is the identity restricted 
to the neighborhoods $N(P_1)$ and $N(P_2)$ and is homotopic to the identity map on $S^{n-1}$ relative to $N(P_1) \cup N(P_2)$.

Now, this diffeomorphism $\gamma_2$ restricted to $S^{n-2}\times [1,2]$ can be shown smoothly pseudo-isotopic to a self-diffeomorphism 
$h:S^{n-2}\times [1,2]\rightarrow S^{n-2}\times [1,2]$ such that, $h$ is level preserving, in other words, if we fix any 
$t \in [1, 2]$ then for all $x \in S^{n-2}$, $h(x,t)=(y,t)$ for some $y \in S^{n-2}$. We shall denote this $y$ as $h_t(x)$.

This construction has been discussed in more details with the references and arguments needed to carry it out in \cite{SOR}.

The reason for taking smaller neighborhoods inside the original neighborhoods of the two points $P_1$ and $P_2$ is because we want the
self-diffeomorphism $h$ of $S^{n-2}\times [1,2]$ to be the identity near $1$ and $2$, which is desirable due to technical reasons. 

If we select an exotic sphere $\Sigma$ of dimension $4k-1$, we can get a similar diffeomorphism $h:S^{4k-3}\times [1,2]\rightarrow S^{4k-3}\times [1,2]$ 
by the above process. With the help of this diffeomorphism $h$ we shall construct a diffeomorphism 
$f:M\rightarrow M$ such that $[f] \in \pi_0(\text{Diff}_0(M))$.

We choose a real number $\alpha$ and a point $p \in M$ such that the injectivity radius of the metric $g_s$ at $p \in M$ is greater than $2\alpha$.
We also want to choose this $p$ in such a way that the metric $g_s$ restricted to a closed geodesic ball of radius $3\alpha$ around $p$ has a constant 
sectional curvature  of $-1$, i.e. is the hyperbolic metric. The justification of being able to choose such a point $p$ is due to the work in \cite{FJ3} and
will be discussed briefly later on.
This geometric piece of information will not be used until in section $4$ of this paper.

We are now in a position to define our self-diffeomorphism $f$ on $M$. To interpret the formula for $f$ that we give below, one needs to identify a
a closed geodesic ball of radius $2\alpha$ centered at $p$ minus the point $p$ with $S^{4k-3} \times (0,2\alpha]$.
The lines $t \mapsto (x,t)$ in $S^{4k-3} \times (0,2\alpha]$ are identified with the unit speed geodesics emanating from $p$ in the geodesic ball. 

We will now define $f \in \text{Diff}(M)$ as follows: \\\\
\centerline {$\displaystyle f(q)  = \begin{cases} 
      q & \text{if } q \notin (S^{4k-3}\times[\alpha,2\alpha])\subset M \\
      (h_{t/\alpha}(x),t) & \text{if } q=(x,t) \in (S^{4k-3}\times[\alpha,2\alpha]) \subset M \\
     \end{cases}$}\\\\
where $h(x,s)=(h_s(x),s)$.\newline

The diffeomorphism $h$ is homotopic to the identity map on $S^{4k-3}\times [1,2]$ relative to the boundary, because
$h_2$ is homotopic to the identity map of $S^{4k-2}$ relative to the neighborhoods $N(P_1)$ and $N(P_2)$. Therefore $f$ constructed as above is
homotopic to the identity on $M$. Hence $f \in \text{Diff}_0(M)$. 

\section{Showing $f$ is not smoothly isotopic to the identity} We will now show that there exists an exotic sphere $\Sigma$
in each dimension $4k-1 (k\ge 2)$ such that there is no path in Diff$(M)$ connecting $f$ (built using this $\Sigma$ as explained in the previous section)
and the identity map on $M$. In other words we shall show that $f$ is not smoothly isotopic to the identity on $M$. This will be achieved by assuming
that there is such a smooth isotopy and arriving at a contradiction.\\\\
This section also uses most arguments from \cite{SOR} but since the Pontryagin numbers of a complex manifold need not all vanish, 
we have to change some arguments for this paper.\\

Before we reach the point where we assume the existence of the smooth isotopy between $f$ and the identity on $M$ let us provide ourselves with some basic set up.

A definition that will be used throughout the section:

\begin{defn}
 Given a smooth manifold $W$ with collared boundary $\partial_1 W \sqcup \partial_2 W$ and a diffeomorphism $F:\partial_1 W \rightarrow \partial_2 W$, we obtain a
 smooth manifold
 without boundary, $W_F := W/x\sim F(x)$ where $x \in \partial_1 W$.
\end{defn}
Note: If we have a homeomorphism or a diffeomorphism $f:N\rightarrow N$ then $(N\times I)_f$ is just the usual mapping torus. Also, a ``collared boundary'' is
a boundary along with a chosen collar neighborhood of the boundary in the manifold.

\vspace{5mm}
Two lemmas that we shall use are as follows:
\begin{lemma}\label{lem:lem1}
 If $\alpha$ and $\beta$ are two homotopic self-homeomorphisms of a non positively curved closed manifold $N$ then $\alpha$ and $\beta$ are topologically 
 pseudo-isotopic provided the dimension of $N$ is greater or equal to $4$.
\end{lemma}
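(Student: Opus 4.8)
The plan is to deduce the lemma from a single application of topological rigidity, after a routine algebraic reduction. First, topological pseudo-isotopy is an equivalence relation on the self-homeomorphisms of $N$ that is compatible with composition: if $G\colon N\times I\to N\times I$ is a pseudo-isotopy from $\phi$ to $\psi$ and $c$ is any self-homeomorphism of $N$, then $(c\times\mathrm{id}_I)\circ G$ is a pseudo-isotopy from $c\circ\phi$ to $c\circ\psi$. Putting $h:=\beta^{-1}\circ\alpha$ and observing that $\alpha\simeq\beta$ forces $h\simeq\mathrm{id}_N$, a pseudo-isotopy from $h$ to $\mathrm{id}_N$ therefore produces, on taking $c=\beta$, a pseudo-isotopy from $\alpha$ to $\beta$. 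So I may assume $\beta=\mathrm{id}_N$ and must only show that a self-homeomorphism $h$ of $N$ with $h\simeq\mathrm{id}_N$ is topologically pseudo-isotopic to $\mathrm{id}_N$.

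For the main step, recall that by Cartan--Hadamard the closed non-positively curved manifold $N$ is aspherical. Fix a homotopy $H_t$, $t\in[0,1]$, with $H_0=\mathrm{id}_N$ and $H_1=h$, and set $\widehat H\colon N\times I\to N\times I$, $\widehat H(x,t)=(H_t(x),t)$; this is a homotopy equivalence (indeed it is homotopic to $\mathrm{id}_{N\times I}$) whose restriction to $N\times\partial I$ is the homeomorphism $\mathrm{id}_N\sqcup h$. I would then invoke topological rigidity in its rel-boundary form: the compact aspherical manifold $N\times I$, whose fundamental group $\pi_1(N)$ is that of a closed non-positively curved manifold and which has dimension $\dim N+1\ge 5$ (this is where the hypothesis $\dim N\ge 4$ is used), is rigid rel boundary --- every homotopy equivalence of pairs $(N\times I,N\times\partial I)\to(N\times I,N\times\partial I)$ that is a homeomorphism on the boundary is homotopic, rel $\partial$, to a homeomorphism. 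Applied to $\widehat H$ this yields a homeomorphism $\bar H\colon N\times I\to N\times I$ with $\bar H|_{N\times 0}=\mathrm{id}_N$ and $\bar H|_{N\times 1}=h$, which is precisely a topological pseudo-isotopy from $\mathrm{id}_N$ to $h$; together with the reduction this proves the lemma. In the mapping-torus language of this section the step reads the same way: $h\simeq\mathrm{id}_N$ gives a homotopy equivalence $(N\times I)_h\to N\times S^1$ over $S^1$, and since $N\times S^1$ is a closed non-positively curved manifold of dimension $\ge 5$ it is topologically rigid; upgrading this to a homeomorphism that respects the two fibrations over $S^1$ and then cutting along a fibre extracts the desired pseudo-isotopy.

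The hard part is exactly this rel-boundary rigidity of $N\times I$ (equivalently, the fibre-preserving upgrade in the mapping-torus picture) --- notice it is essentially as strong as the conclusion of the lemma. It rests on the Farrell--Jones topological rigidity theorem for compact non-positively curved manifolds, applied to the double $N\times S^1$, together with the vanishing of $\mathrm{Wh}(\pi_1 N)$, $\widetilde K_0(\mathbb Z[\pi_1 N])$ and the lower $K$-groups of $\pi_1 N$, which forces the relevant $s$-cobordism, splitting, and fibering obstructions to vanish; this is where both the non-positive curvature and the dimension bound genuinely enter. Everything else is the bookkeeping above, and $\dim N\ge 4$ is needed only so that $N\times I$ (hence also $N\times S^1$) has dimension at least $5$, the range in which these high-dimensional surgery and rigidity techniques apply.
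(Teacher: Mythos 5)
Your argument is correct: the reduction to $\beta=\mathrm{id}_N$ is routine, and the main step is exactly the Farrell--Jones rel-boundary topological rigidity of $N\times I$ (triviality of the structure set of $M\times D^n$ rel boundary for $M$ closed non-positively curved, $\dim M+n\ge 5$, using $\mathrm{Wh}(\pi_1 N)=0$), which is where the hypothesis $\dim N\ge 4$ enters. This is the same route as the paper, which does not prove the lemma but cites precisely this Farrell--Jones rigidity result \cite{FJ1}; your write-up just supplies the standard bookkeeping deriving the pseudo-isotopy statement from it.
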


\begin{lemma}\label{lem:lem2}
 In the context of Definition $3.1$ if 
 $\alpha$ and $\beta$ are two topologically pseudo isotopic homeomorphisms from $\partial_2 W$ to $\partial_1 W$, then $W_\alpha$ is homeomorphic
 to $W_\beta$.
\end{lemma}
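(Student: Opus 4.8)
The plan is to prove $W_\alpha\cong W_\beta$ by absorbing the pseudo-isotopy into an external collar of $W$, so that the only geometric input is the collaring data already built into Definition 3.1. Recall that a topological pseudo-isotopy between $\alpha$ and $\beta$ is a homeomorphism $H\colon\partial_2 W\times I\to\partial_1 W\times I$ which restricts to $(y,0)\mapsto(\alpha(y),0)$ on $\partial_2 W\times\{0\}$ and to $(y,1)\mapsto(\beta(y),1)$ on $\partial_2 W\times\{1\}$ (in particular $H$ carries each end of the cylinder to the corresponding end). I will also use repeatedly the standard collar-absorption fact: attaching an external collar $P\times I$ to a manifold $Y$ along a collared boundary component $P$, by gluing $P\times\{1\}$ to $P$, produces a manifold homeomorphic to $Y$ via a homeomorphism that is the identity off a collar of $P$ and sends the new boundary $P\times\{0\}$ to $P$ by $(x,0)\mapsto x$. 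Consequently such an enlargement changes neither the homeomorphism type of $Y$ nor that of any quotient formed by gluing $P$ to a second boundary component.

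First I would enlarge $W$: using the collar of $\partial_2 W$ provided by Definition 3.1, form $W'=W\cup_{\partial_2 W}(\partial_2 W\times I)$ with $(y,1)\sim y\in\partial_2 W$, so that $W'$ has boundary components $\partial_1 W$ and $\partial_2 W\times\{0\}$. Collar absorption gives $W'_\alpha\cong W_\alpha$, where $W'_\alpha$ is obtained by gluing $\partial_2 W\times\{0\}$ to $\partial_1 W$ via $\alpha$. Next I would apply the homeomorphism $\mathrm{id}_W\cup H$ to $W'$, replacing the external collar $\partial_2 W\times I$ by $\partial_1 W\times I$, and track how the two identifications transform. The end coming from $\partial_2 W\times\{0\}$ becomes glued to $\partial_1 W$ by the identity — there $H=\alpha$, which cancels the gluing map $\alpha$ — so this end is now a genuine external collar on $\partial_1 W$; the end coming from $\partial_2 W\times\{1\}$ becomes glued to $\partial_2 W$ via $\beta^{-1}$, since there $H=\beta$. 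Thus $W'_\alpha$ is homeomorphic to $W$ with an identity-glued external collar on $\partial_1 W$ whose free end is attached to $\partial_2 W$ via $\beta^{-1}$. Absorbing that collar a second time turns this into $W$ with $\partial_1 W$ glued to $\partial_2 W$ via $\beta^{-1}$, which is exactly $W_\beta$. Composing all the homeomorphisms yields $W_\alpha\cong W'_\alpha\cong W_\beta$.

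I do not expect a serious obstacle: the argument is entirely a cut-and-paste in the topological category, which is precisely what the hypothesis provides (and is the reason Lemma $3.2$ is stated topologically rather than smoothly). The one place needing care is the bookkeeping in the middle step — keeping track of which way each end of the collar is glued after composing with $H$, and verifying that each collar-absorption homeomorphism carries the remaining gluing data to the stated map. A minor point to dispatch at the outset is the direction convention: Definition $3.1$ glues along a map $\partial_1 W\to\partial_2 W$, while $\alpha,\beta$ point the other way, but $W_\alpha$ and $W_{\alpha^{-1}}$ denote the same space, so it is immaterial whether one glues by $\alpha$ or $\alpha^{-1}$ (and which component is labelled first), and the whole argument is symmetric under interchanging $\partial_1 W$ and $\partial_2 W$.
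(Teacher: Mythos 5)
Your collar-absorption argument is correct: enlarging $W$ by an external collar on $\partial_2 W$, replacing that collar by $\partial_1 W\times I$ via the pseudo-isotopy $H$ (which converts the $\alpha$-gluing into an identity gluing and introduces a $\beta^{-1}$-gluing at the other end), and then reabsorbing the collar indeed yields $W_\alpha\cong W_\beta$, and your handling of the direction convention ($W_F=W_{F^{-1}}$) is fine. The paper omits a proof of this lemma, remarking only that it "is not difficult to prove," and your argument is precisely the standard one the authors evidently have in mind.
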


The second lemma is not difficult to prove. The first lemma is a result by Farrell and Jones \cite{FJ1}.

In all that follows now, $\Sigma$ stands for the exotic sphere of dimension $4k-1$ used in section \ref{sec:construct}, and $id_N$ will denote
the identity map on a manifold $N$.

The following proposition is a fact that is true because of the way our $f$ has been constructed. 
We are not yet assuming $f$ to be smoothly isotopic to $id_M$.

\begin{prop}
There is a diffeomorphism $\mathcal{F}_1:(M\times S^1)\# \exo\rightarrow(M\times I)_f$.
\end{prop}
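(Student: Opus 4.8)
The plan is to exploit the fact that $f$ equals the identity outside a small embedded disk of $M$, so that the mapping torus $(M\times I)_f$ differs from $M\times S^1$ only over that disk, and to identify this local difference with a connected sum with the twisted sphere $\Sigma$.

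First I would localize. Set $B:=\overline{B_{g_s}(p,2\alpha)}\subset M$, a smoothly embedded closed $(4k-2)$-disk whose interior contains the support $S^{4k-3}\times[\alpha,2\alpha]$ of $f$; since $h$ is the identity near the level $2\alpha$, the restriction $f|_B$ is the identity near $\partial B$, so $f|_B\in\text{Diff}(B,\partial B)$. Because $f=\mathrm{id}$ on $M\setminus\mathrm{int}\,B$ one has
\[(M\times I)_f=\big((M\setminus\mathrm{int}\,B)\times S^1\big)\cup_{\partial B\times S^1}(B\times I)_{f|_B},\qquad M\times S^1=\big((M\setminus\mathrm{int}\,B)\times S^1\big)\cup_{\partial B\times S^1}(B\times S^1),\]
both gluings being along the identity of $\partial B\times S^1$. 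Hence it is enough to build a diffeomorphism $(B\times I)_{f|_B}\to(B\times S^1)\#\Sigma$ that restricts to the identity on $\partial B\times S^1$ and realizes the connected sum in the interior of $B\times S^1$; gluing in the identity of $(M\setminus\mathrm{int}\,B)\times S^1$ along $\partial B\times S^1$ and taking inverses then produces $\mathcal F_1$, since the connected-sum ball lies in the interior of $B\times S^1\subset M\times S^1$ and is untouched by the gluing.

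For the local statement I would use the standard cut-and-paste identification behind the clutching-function description of exotic smooth structures on products (the same computation carried out in \cite{SOR}): for every $\psi\in\text{Diff}(D^m,\partial D^m)$ there is a diffeomorphism $(D^m\times I)_\psi\cong(D^m\times S^1)\#\Sigma_\psi$, equal to the identity on $\partial D^m\times S^1$ and with connected-sum region interior, where $\Sigma_\psi:=D^{m+1}\cup_{\widehat\psi}D^{m+1}$ and $\widehat\psi\in\text{Diff}(S^m)$ agrees with $\psi$ on one hemisphere and with the identity on the other (the proof cuts $D^m\times I\cong D^{m+1}$ along the gluing $m$-disk, recognizes the two pieces again as $(m+1)$-disks, and checks that regluing by $\psi$ instead of by $\mathrm{id}$ is exactly an interior connected sum with $\Sigma_\psi$). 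Applying this with $m=4k-2$ and $\psi=f|_B$ reduces everything to identifying $\Sigma_{f|_B}$ with $\Sigma$. Now $\widehat{f|_B}\in\text{Diff}(S^{4k-2})$ is, by construction, the identity on two disjoint polar caps of $S^{4k-2}$ and equals $h$ on the complementary band $S^{4k-3}\times[1,2]$; that is, it is assembled from $h$ in exactly the way $\gamma_2$ is assembled from $\gamma_2|_{S^{4k-3}\times[1,2]}$. Since $h$ is smoothly pseudo-isotopic, rel the two ends, to $\gamma_2|_{S^{4k-3}\times[1,2]}$, extending this pseudo-isotopy by the identity over the caps shows $\widehat{f|_B}$ and $\gamma_2$ are pseudo-isotopic diffeomorphisms of $S^{4k-2}$; as $S^{4k-2}$ is simply connected of dimension $4k-2\ge 6$, Cerf's pseudo-isotopy theorem upgrades this to an isotopy, so $\Sigma_{f|_B}=D^{4k-1}\cup_{\widehat{f|_B}}D^{4k-1}\cong D^{4k-1}\cup_{\gamma_2}D^{4k-1}$, which is $\Sigma$ because $\gamma_1$ and $\gamma_2$ are isotopic. (Should the orientation conventions force $-\Sigma$, this is harmless, since an exotic sphere and its reverse are equally valid choices of the $\Sigma$ fixed in Section~\ref{sec:construct}.) Combining this with the local identification gives $(B\times I)_{f|_B}\cong(B\times S^1)\#\Sigma$ rel $\partial B\times S^1$, and reassembling as above completes the argument.

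The only substantial point is the local identification; localization, reassembly, and the pseudo-isotopy-to-isotopy step are routine. Making the local identification precise means writing the diffeomorphism down explicitly and verifying both that the twist contributes exactly one connected summand $\Sigma_\psi$ and that it can be confined to the interior of $D^m\times S^1$, away from $\partial D^m\times S^1$, so that it glues up with the rest of $M\times S^1$ — and for this I would follow \cite{SOR} essentially verbatim.
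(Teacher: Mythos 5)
Your proposal is correct, but it is organized differently from the paper's argument. The paper works on the connected-sum side: it writes $(M\times S^1)\#\Sigma=(M\times S^1\setminus D^{4k-1})\sqcup_{id}(\Sigma\setminus D^{4k-1})$, uses the presentation of $\Sigma$ already set up in Section 2 as a twisted double of two disks glued by (the extension by the identity of) $h_2$, decomposes $\Sigma\setminus D^{4k-1}$ as $D^{4k-1}\sqcup_{\hat{h}_2}D^{4k-1}$, and then recognizes the resulting union directly as $(M\times I)_f$; no separate identification of a twisted sphere with $\Sigma$ is needed because the pseudo-isotopy from $h$ to $\gamma_2|_{S^{4k-3}\times[1,2]}$ was already absorbed into that presentation of $\Sigma$ (via \cite{SOR}). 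You instead work on the mapping-torus side: you localize over the ball $B$, invoke the standard clutching lemma $(D^m\times I)_\psi\cong(D^m\times S^1)\#\Sigma_\psi$ rel boundary (the $\pi_0\text{Diff}(D^m,\partial D^m)\to\Theta_{m+1}$ picture), and then must re-identify $\Sigma_{f|_B}$ with $\Sigma$, which is where the same input --- the rel-ends pseudo-isotopy between $h$ and $\gamma_2|_{S^{4k-3}\times[1,2]}$ --- reappears. Two small comments: Cerf's theorem is more than you need at that point, since smoothly pseudo-isotopic gluing maps already give diffeomorphic twisted doubles (the smooth analogue of Lemma 3.3, obtained by inserting the pseudo-isotopy as a collar), which is how the paper and \cite{SOR} implicitly proceed; and your orientation caveat is harmless for the same reason you give, since the later choice of representative $\Sigma_i$ in Theorem 3.11 can absorb a reversal. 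What your route buys is modularity and explicit control of the boundary behavior (useful later, e.g.\ for the fundamental-group statements about $\mathcal{F}_1$); what the paper's route buys is brevity, since the twisted-double description of $\Sigma$ by $h$ is already in hand from Section 2.
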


\begin{proof}
First let us observe that,
$$M\times S^1 \# \exo = (M \times S^1 \setminus D^{4k-1}) \bigsqcup_{id} (\exo \setminus D^{4k-1})$$
Now, if we define $\Hat{h}_2=h_2|_{S^{4k-2}\setminus N(P_1)}$, then
$$\exo\setminus D^{4k-1}=D^{4k-1}\bigsqcup_{\hat{h}_2} D^{4k-1}$$
This has been illustrated in Fig 1.
In the left hand side picture we are showing how $\exo$ is formed by identifying the boundary of two copies of $D^{4k-1}$ by $h_2$ (we are
thinking of $D^{4k-1}$ as a solid ball with boundary$S^{4k-2}$).
On the right hand side picture we are showing how partially identifying the boundary by the restricted map $\hat{h}_2$ we obtain
$\exo\setminus D^{4k-1}$. The shadings are there to help understand the partial identification better and relate it further to the rest of the proof.

$$\includegraphics[width=13cm,height=6.5cm,scale=3]{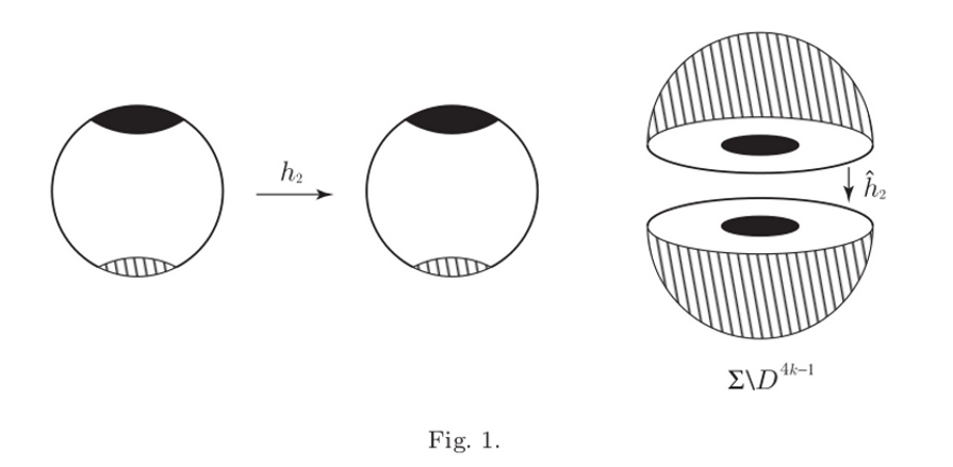}$$

So we now have,
$$M\times S^1 \# \exo = (M \times S^1 \setminus D^{4k-1}) \bigsqcup_{id} (D^{4k-1}\bigsqcup_{\hat{h}_2} D^{4k-1}) $$
and the right hand side of this equation is $(M\times I)_f$, which is being illustrated in Fig. 2:

$$\includegraphics[width=13cm,height=9cm]{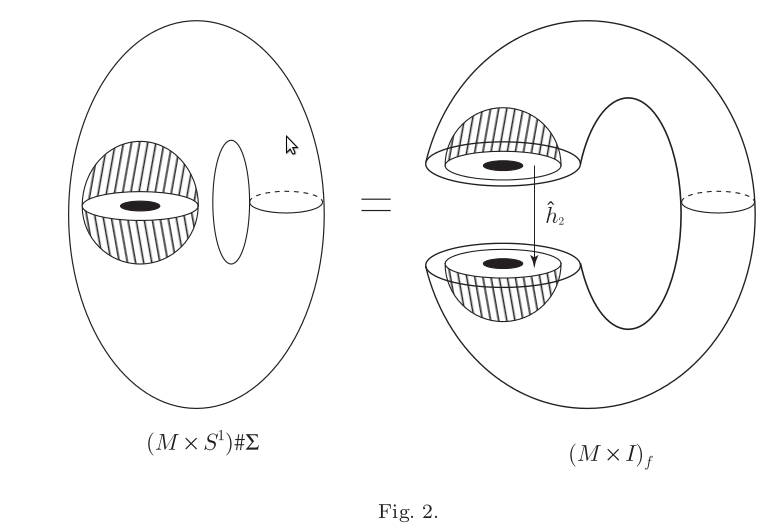}$$

\end{proof}

Finally we are going to assume that if possible $f$ be smoothly isotopic to $id_M$ and make the following observation:

\begin{prop}\label{prop:pi}
 If $f$ is smoothly isotopic to $id_M$ then, there is a diffeomorphism $\mathcal{F}_2:M\times S^1\rightarrow (M\times I)_f$.
 Also, the induced map ${\mathcal{F}_2}_*:\pi_1(M\times S^1)\rightarrow \pi_1((M\times I)_f )$ restricted to 
 $\pi_1(M) \subset \pi_1((M\times I)_f)$ is the identity map onto $\pi_1(M) \subset \pi_1(M\times S^1)$.
\end{prop}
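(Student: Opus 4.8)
The plan is to build $\mathcal{F}_2$ from a smooth isotopy $H$ between $f$ and $id_M$ and then to read off the effect on $\pi_1$ from the explicit form of that identification. First I would use the hypothesis: a smooth isotopy $H:M\times I\to M$ with $H_0=f$ and $H_1=id_M$ gives a diffeomorphism $\Phi:M\times I\to M\times I$, $\Phi(q,t)=(H_t(q),t)$, which carries the gluing $(q,0)\sim f(q)$... — more precisely, $\Phi$ restricts to $f$ on $M\times\{0\}$ (up to the identification) and to $id_M$ on $M\times\{1\}$. Passing to quotients, $\Phi$ descends to a diffeomorphism from the mapping torus of $id_M$, namely $M\times S^1$, to $(M\times I)_f$. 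This is the standard fact that the diffeomorphism type of a mapping torus depends only on the isotopy class of the gluing map; here I only need the special case where one of the two maps is the identity, so I should spell out $\Phi$ concretely so that its behavior on fundamental groups is transparent.

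Next I would compute $(\mathcal{F}_2)_*$ on $\pi_1$. Both $M\times S^1$ and $(M\times I)_f$ have fundamental group $\pi_1(M)\times\mathbb{Z}$ (for the latter, because $f\in\text{Diff}_0(M)$ acts trivially on $\pi_1(M)$, so the mapping-torus semidirect product is actually a direct product), and in each case there is a canonical copy of $\pi_1(M)$ coming from the fiber $M\times\{*\}$. The point is that $\Phi$ maps the fiber $M\times\{1\}$ (on which it is the identity) to a fiber $M\times\{1\}$ in $(M\times I)_f$, and this fiber inclusion represents the distinguished $\pi_1(M)\subset\pi_1((M\times I)_f)$; hence on that subgroup $(\mathcal{F}_2)_*$ is literally induced by $id_M$, i.e. the identity. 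I would phrase this by choosing the basepoint on the $t=1$ fiber and noting $\Phi|_{M\times\{1\}}=id_{M\times\{1\}}$, so the claim is immediate once the identifications of $\pi_1(M)$ as a subgroup on both sides are made via the $t=1$ slice.

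The one genuine subtlety — and the step I expect to be the main obstacle — is bookkeeping with the collars and the precise convention for the quotient $(M\times I)_f$: the gluing in Definition $3.1$ identifies a collar of $\partial_1 W$ with a collar of $\partial_2 W$ via $f$, so to get an honest \emph{smooth} diffeomorphism (not merely a homeomorphism) from $\Phi$ I must arrange $H$ to be constant near $t=0$ and near $t=1$ (a standard reparametrization of any isotopy) so that $\Phi$ respects the collar structure and descends smoothly. I would also need to be slightly careful that "the copy of $\pi_1(M)$" is well-defined: in $M\times S^1$ it is unambiguous, while in $(M\times I)_f$ different fibers give conjugate subgroups, but since $f$ is homotopic to $id_M$ these are all equal (not merely conjugate) and equal to the standard $\pi_1(M)$, so the statement "restricted to $\pi_1(M)\subset\pi_1((M\times I)_f)$" is meaningful. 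Once these conventions are pinned down, the proof is just "write down $\Phi$, quotient, restrict to the $t=1$ fiber."
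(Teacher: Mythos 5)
Your proposal follows essentially the same route as the paper: turn the isotopy into a level-preserving diffeomorphism of $M\times I$ that is $id_M$ on one boundary fiber and $f$ on the other, descend it to a diffeomorphism $(M\times I)_{id}=M\times S^1\to (M\times I)_f$, and read off the $\pi_1$ statement by restricting to the boundary fiber on which the map is the identity (the paper uses the $t=0$ slice where yours uses $t=1$, a matter of convention). Your extra care about making the isotopy constant near the ends and about the well-definedness of the subgroup $\pi_1(M)$ is sound but does not change the argument.
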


\begin{proof}
Since $f$ is smoothly isotopic to $id_M$ we have a diffeomorphism $F:M\times I\rightarrow M\times I$ such that $F|_{M\times \{0\}}=id_M$ and
$F|_{M\times \{1\}}=f$. Now this $F$ induces a diffeomorphism in the quotient space $\mathcal{F}_2:(M\times I)_{id}\rightarrow (M\times I)_f$,
as well-definedness follows from the definition of $F$.
\\\\
 Now the basepoint can be chosen to be in $M\times\{0\}$ in both spaces $M\times S^1$ and $(M\times I)_f$
 and we also identify $M$ with $M\times\{0\}$.
 Since $\mathcal{F}_2$ preserves the level $M\times \{0\}$ we have
 ${\mathcal{F}_2}_*:\pi_1((M\times I)_f)\rightarrow \pi_1(M\times S^1)$ restricted to 
  $\pi_1(M) \subset \pi_1(M\times S^1)$ is the identity map onto $\pi_1(M) \subset \pi_1((M\times I)_f)$.
\end{proof}

Composition of the two diffeomorphisms in the above two propositions yield a diffeomorphism $\mathcal{F}=\mathcal{F}_2^{-1}\circ\mathcal{F}_1$: 
\begin{displaymath}
 \mathcal{F}:(M\times S^1)\# \exo\rightarrow (M\times S^1)
\end{displaymath}

We now discuss one property of this diffeomorphism $\mathcal{F}$, as stated in the proposition
that follows. This property will be useful towards reaching a contradiction.
\begin{prop}\label{prop:pseu}
 $\mathcal{F}$ is topologically pseudo-isotopic to the identity map.
\end{prop}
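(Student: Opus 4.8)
The plan is to compare the two ways of exhibiting $(M\times I)_f$ as a closed manifold and to track what the composite diffeomorphism $\mathcal F$ does on the $S^1$-factor, then invoke Lemmas~\ref{lem:lem1} and \ref{lem:lem2}. First I would reduce the statement to a statement about a self-homeomorphism of $M\times S^1$. Since $\exo$ is homeomorphic (though not diffeomorphic) to $S^{4k-1}$, connect-summing with $\exo$ does not change the homeomorphism type, so $(M\times S^1)\#\exo$ is canonically homeomorphic to $M\times S^1$; composing $\mathcal F$ with this canonical homeomorphism yields a self-homeomorphism $\Phi\colon M\times S^1\to M\times S^1$. It suffices to show $\Phi$ is topologically pseudo-isotopic to $\mathrm{id}_{M\times S^1}$, and by Lemma~\ref{lem:lem1} (note $\dim(M\times S^1)=4k-1\ge 4$, and $M\times S^1$ carries a nonpositively curved metric since $M$ does) it is enough to show $\Phi$ is homotopic to the identity.

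Next I would identify $\Phi$ on fundamental groups. By Proposition~\ref{prop:pi}, the map $\mathcal F_2$ restricts to the identity on the $\pi_1(M)$ summand; the connect-sum operation is supported in a disk, hence does not affect $\pi_1$; and $\mathcal F_1$ of Proposition~3.6 is built so that it respects the $M$-coordinate away from the surgery region, so it too is the identity on $\pi_1(M)$. Therefore $\Phi_*$ is the identity on the $\pi_1(M)$ factor of $\pi_1(M\times S^1)=\pi_1(M)\times\mathbb Z$. For the $\mathbb Z$ factor, I would trace the $S^1$-direction through Fig.~1 and Fig.~2: the circle direction in $(M\times I)_f$ is exactly the $I$-direction of the mapping torus, and under $\mathcal F_1$ this corresponds to the interval along which the two copies of $D^{4k-1}$ are glued, i.e. to the generator of $\pi_1(S^1)$ in $M\times S^1\setminus D^{4k-1}$; so $\Phi_*$ is the identity on $\mathbb Z$ as well (up to sign, which one fixes by choosing orientations consistently, as was done when arranging $h$ to be level-preserving and the identity near the ends). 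Hence $\Phi_*=\mathrm{id}$ on $\pi_1(M\times S^1)$.

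Now I would promote "$\Phi_*=\mathrm{id}$ on $\pi_1$" to "$\Phi\simeq\mathrm{id}$". Since $M$ is a closed complex hyperbolic manifold it is aspherical, and $S^1$ is aspherical, so $M\times S^1$ is a $K(\pi,1)$; for aspherical targets a map is determined up to homotopy by its effect on $\pi_1$ (once a basepoint path is chosen), so $\Phi$ is homotopic to the identity. Combined with the previous paragraphs, Lemma~\ref{lem:lem1} gives that $\Phi$ is topologically pseudo-isotopic to $\mathrm{id}_{M\times S^1}$, and unwinding the canonical homeomorphism $(M\times S^1)\#\exo\cong M\times S^1$ (which is itself isotopic to the "identity" in the appropriate sense, being supported in a disk) shows $\mathcal F$ is topologically pseudo-isotopic to the identity.

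The main obstacle I expect is the bookkeeping in the middle step: one must check carefully that $\mathcal F_1$ really is the identity on the $M$-direction and sends the $S^1$-generator to the $S^1$-generator, which amounts to reading off the gluing pattern in Figures~1 and~2 and confirming that the exotic-sphere surgery is localized away from anything that sees $\pi_1$. The asphericity argument and the application of Farrell--Jones (Lemma~\ref{lem:lem1}) are then routine; the only subtlety there is confirming the dimension hypothesis and the existence of the nonpositively curved metric on $M\times S^1$, both of which are immediate.
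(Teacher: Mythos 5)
Your top-level skeleton is the same as the paper's: reduce to a self-homeomorphism of $M\times S^1$, show it is homotopic to the identity, and then invoke Lemma~\ref{lem:lem1} (the Farrell--Jones pseudo-isotopy theorem, applicable since $M\times S^1$ is closed, nonpositively curved and of dimension $4k-1\ge 4$), exactly as the paper does. The gap is in your middle step, the assertion that $\Phi_*=\mathrm{id}$ on $\pi_1(M\times S^1)=\pi_1(M)\times\mathbb{Z}$. Your justification for the $\mathbb{Z}$-direction is ``trace the $S^1$-direction through Fig.~1 and Fig.~2,'' but those figures only describe $\mathcal{F}_1$. The map $\mathcal{F}_2$ is induced by a completely arbitrary, merely hypothetical smooth isotopy $F$ from $id_M$ to $f$, and the track $t\mapsto p(F(m_0,t))$ of the basepoint under this isotopy is an uncontrolled loop in $M$. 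Consequently, a priori the composite $\Phi$ sends the $S^1$-generator to an element of the form $(\gamma,\pm1)$ with $\gamma\in\pi_1(M)$ possibly nontrivial; Proposition~\ref{prop:pi} is of no help here, since it only controls the restriction of $\mathcal{F}_{2*}$ to the $\pi_1(M)$-summand. If $\gamma\neq 1$ then $\Phi$ would \emph{not} be homotopic to the identity, so this is precisely the crux of the proposition, and it cannot be settled by bookkeeping with the gluing pictures.

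The missing ingredient is the triviality of the center of $\pi_1(M)$, which is exactly what the paper injects at this point: its Claim~1 is proved using the fact that for $M$ aspherical with center$(\pi_1(M))=1$ the identity component of the space $\mathscr{H}(M)$ of self homotopy equivalences has trivial $\pi_1$ (Gottlieb), which is what kills the potential loop $\gamma$; Claim~2 then handles the $S^1$-coordinate via $K(\mathbb{Z},1)$ considerations. In your formulation the quickest repair is algebraic: the $S^1$-generator is central in $\pi_1(M)\times\mathbb{Z}$, so its image under the automorphism $\Phi_*$ is central; since $M$ is a closed negatively curved manifold, $Z(\pi_1(M))=1$, so the center of $\pi_1(M)\times\mathbb{Z}$ is $1\times\mathbb{Z}$ and the image is $(1,\pm1)$; the sign is $+1$ because $\mathcal{F}_2$ is induced by a level-preserving diffeomorphism of $M\times I$ and $\mathcal{F}_1$ preserves the circle coordinate outside the surgery region. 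With that paragraph added (and with your correct observation that a self-map of the aspherical space $M\times S^1$ inducing the identity on $\pi_1$ is homotopic to the identity), your argument closes up and is essentially equivalent to the paper's proof.
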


\begin{rem}
 The term ``identity map'' has been abused in the above proposition because the source and target space of the map $\mathcal{F}$ 
 are not the same smooth manifolds. Hence saying $\mathcal{F}$ is not topological pseudo-isotopic to the identity does not really make
 sense. To be correct, we should note that there is a homeomorphism from $(M \times S^1)\# S^{4k-1}$ to $(M\times S^1)\# \exo$ and we
 call this homeomorphism $G_1$, also note that there is a homeomorphism from $M\times S^1$ to $(M \times S^1)\# S^{4k-1}$ and call this 
 homeomorphism $G_2$. Then the correct restatement of the above proposition is that $G_2^{-1} \circ F \circ G_1$ is topologically pseudo-isotopic to the 
 identity on $(M \times S^1)\# S^{4k-1}$. We thank the referee for pointing out this detail.\\
 
 In an attempt to make the reading less notation heavy, we keep the proposition free from the above mentioned rigor and consider both
 $(M\times S^1)\# \exo$ and $M\times S^1$ to stand for their underlying topological spaces and since they are homeomorphic, we can think of
 them as being the same topological space, hence the use of the term ``identity map''.
 Also when we talk of $\mathcal{F}$ we think of it is a homeomorphism forgetting the fact that 
 it is also a diffeomorphism. The same understanding has been adopted for the proof that follows.
\end{rem}

\begin{proof}
 We shall prove $\mathcal{F}$ is homotopic to the identity $id$. Because then by Lemma 3.2 we are done.
 
 Let $p:M\times S^1\rightarrow M$ be defined as $p(m,x)=m$ and $q:M\times S^1\rightarrow S^1$ be defined as $q(m,x)=x$.
 Then we would like to make two claims as follows:\\\\
 Claim 1: $p$ is homotopic to $p\circ \mathcal{F}$.\\
 Claim 2: $q$ is homotopic to $q\circ \mathcal{F}$.\\
 
 Let the homotopies in the claims 1 and 2 be $k^1_t$ and $k^2_t$ respectively with $k^1_0=p, k^1_1=p\circ \mathcal{F}$ and $k^2_0=q, k^2_1=q\circ \mathcal{F}$.
 Then the required homotopy between $\mathcal{F}$ and $id$ is, 
 $$K_t:M\times S^1 \rightarrow M\times S^1, K_t(m,x)=(k^1_t(m,x),k^2_t(m,x)).$$
 
 Let us now see why Claim 1 is true. Denote by $\mathscr{H}(M)$ the space of all self homotopy equivalences of $M$. 
 Define $\alpha: S^1 \rightarrow \mathscr{H}(M)$ by $\alpha(x)(m)=p\circ \mathcal{F}(m,x)$.
 It is a fact that since $M$ is aspherical and center$(\pi_1(M))=1$ we have $\pi_1(\mathscr{H}(M))=1$.
 Hence the loop $\alpha$ is null-homotopic, giving a homotopy $g_t:S^1\rightarrow \mathscr{H}$ such that $g_0(x)(m)=p\circ F(m,x)$ and $g_1(x)(m)=m$.
 From this homotopy we derive the homotopy we need, defined as $G_t:M\times S^1\rightarrow M$ such that $G_t(m,x)=g_t(x)(m)$.\\
 
 Now let us see why Claim 2 is true. Define the map $\bar{q}:(M\times I)_f\rightarrow S^1$.
 To prove the claim we separately prove that $q$ is homotopic to $\bar{q}\circ \mathcal{F}_1$ and $q$ is homotopic to $\bar{q}\circ \mathcal{F}_2$.
 Since $S^1$ is a $K(\mathbb{Z},1)$ we can establish this by showing 
 $\bar{q}_\#=q_\#\circ {\mathcal{F}_1}_\#$ and $\bar{q}_\#=q_\#\circ {\mathcal{F}_2}_\#$, where the subscript $\#$ on any map denotes its corresponding
 induced map at the level of fundamental groups.
 These can be done by choosing the basepoints carefully and for the second equality we shall need the condition on homotopy we stated in Proposition 3.5.
\end{proof}

Let $T$ denote $M\times S^1$ from now onwards.

Assuming this diffeomorphism $\mathcal{F}:T\# \exo\rightarrow T$ exists we want to arrive at a contradiction. 

Using this hypothetical diffeomorphism $\mathcal{F}$ we will make a hypothetical smooth manifold $\mathscr{N}$ of dimension real dimension $4k$.

Let the exotic sphere $\exo$ we selected bound $W$, a $4k$-dimensional parallelizable manifold. In Kervaire and Milnor's paper \cite{MIL-KER}
we learn that such exotic spheres that bound parallelizable manifolds do exist in the dimensions that we are working with. We shall use information
on these special exotic spheres as and when we need from \cite{MIL-KER}.

Let us now denote the boundary connect sum of $T \times [0,1]$ and $W$ by $T\times [0,1] \hspace{1mm} \#_b  \hspace{1mm} W$.
The boundary of this manifold $T\times [0,1] \hspace{1mm} \#_b  \hspace{1mm} W$ is the disjoint union of $T\hspace{1mm}\#\hspace{1mm}\exo$ and $T$. 

We now build a new closed, smooth, orientable manifold $\mathcal{M}$ of dimension $4k$ as described below:
\begin{defn} $\mathcal{M}:=(T \times [0,1] \hspace{1mm} \#_b  \hspace{1mm} W)_\mathcal{F}$.
\end{defn}

Let $D$ be a $4k$ dimensional disk. The boundary of $D$ is $S^{4k-1}$. The boundary of $W$ is $\exo$ which is homeomorphic to $S^{4k-1}$. 
By gluing the boundaries of $W$ and $D$ via this homeomorphism we obtain a compact topological manifold without boundary, which we will 
denote as $W\cup D$.

\begin{prop}\label{prop1}
$\mathcal{M}$ is homeomorphic to $(T\times S^1)\hspace{1mm}\#\hspace{1mm}(W \cup D)$, where $D$ is a $4k$-dimensional disk.
\end{prop}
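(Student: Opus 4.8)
The plan is to compute $\mathcal M$ by unwinding the two gluing operations that define it and reorganizing them. Recall $\mathcal M = (T\times[0,1]\,\#_b\, W)_{\mathcal F}$, where the boundary of $T\times[0,1]\,\#_b\, W$ is $(T\#\exo)\sqcup T$ and $\mathcal F\colon T\#\exo\to T$ is the diffeomorphism from Proposition~\ref{prop:pseu}. The first observation is that forming $N_{\mathcal F}$ for a cobordism $N$ with $\partial N = \partial_1 N\sqcup\partial_2 N$ and $\mathcal F\colon\partial_1 N\to\partial_2 N$ is, up to homeomorphism, the same as gluing in a mapping cylinder; so I would first replace $\mathcal F$ by something more tractable. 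By Proposition~\ref{prop:pseu}, $\mathcal F$ is topologically pseudo-isotopic to the identity (in the precise sense of the Remark, via the homeomorphisms $G_1,G_2$ identifying $T\#S^{4k-1}$, $T\#\exo$ and $T$). Invoking Lemma~\ref{lem:lem2} with $W$ there equal to $T\times[0,1]\,\#_b\,W$ and the two boundary identifications being $\mathcal F$ and the ``identity'' $T\#\exo\xrightarrow{\ \cong\ } T$, we get a homeomorphism $\mathcal M \cong (T\times[0,1]\,\#_b\, W)_{id}$, where now the two boundary components are glued by the standard homeomorphism. This step converts the twisted gluing into an untwisted one and is where Proposition~\ref{prop:pseu} is used; I expect this to be the conceptual heart of the argument, though it is short given the lemmas.

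Next I would identify $(T\times[0,1]\,\#_b\, W)_{id}$ topologically. Gluing the two ends of $T\times[0,1]$ by the identity gives $T\times S^1$; the boundary connect summand $W$ gets carried along, so $(T\times[0,1]\,\#_b\, W)_{id}$ is homeomorphic to the connect sum of $T\times S^1$ with the closed manifold obtained from $W$ by capping its boundary $\exo\cong S^{4k-1}$. More carefully: performing $\#_b$ with $W$ along a boundary disc $D^{4k-1}\subset T\times\{1\}$ (say) and then gluing $T\times\{0\}$ to $T\times\{1\}$ by the identity, the region away from the summand $W$ closes up to $T\times S^1$ minus an open disc, and the $W$ part contributes $W$ minus a boundary collar. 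Reassembling, the result is $(T\times S^1)\,\#\,(W\cup_\phi D^{4k})$ where $\phi\colon\partial W=\exo\to S^{4k-1}=\partial D^{4k}$ is the homeomorphism used in the definition of $W\cup D$ — this is exactly because capping off the free boundary sphere of $W$ with a disc and then taking connect sum is the standard description of the effect of boundary-connect-summing a cobordism into a mapping torus. I would draw the picture (in the spirit of Figs.~1 and 2) to make the rearrangement transparent, since the argument is essentially a cut-and-paste bookkeeping of collars.

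The main technical point to be careful about — and the step I expect to be the real obstacle — is the smoothing/topology mismatch: $W\cup D$ is only a \emph{topological} manifold (since $\exo$ is an exotic sphere, $W\cup D$ need not be smoothable with the obvious smooth structures on the pieces), so all the homeomorphisms above must be taken in TOP, not DIFF, and one must check that the pseudo-isotopy from Proposition~\ref{prop:pseu} interacts correctly with the boundary-connect-sum region (i.e.\ that the pseudo-isotopy can be taken to be supported away from, or extended across, the embedded disc along which $\#_b$ is performed). This is why the statement only claims a homeomorphism. Concretely I would: (i) arrange the connect-sum disc and the support of the pseudo-isotopy to be disjoint, so that Lemma~\ref{lem:lem2} applies verbatim to the cobordism $T\times[0,1]\,\#_b\,W$; (ii) apply Lemma~\ref{lem:lem2} to pass from $\mathcal F$ to the identity gluing; (iii) do the collar rearrangement of the previous paragraph to recognize the result as $(T\times S^1)\,\#\,(W\cup D)$. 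Combining (i)–(iii) gives the claimed homeomorphism $\mathcal M \cong (T\times S^1)\,\#\,(W\cup D)$.
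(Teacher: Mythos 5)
Your proposal is correct and follows essentially the same route as the paper: use Proposition~\ref{prop:pseu} together with Lemma~\ref{lem:lem2} to replace the gluing by $\mathcal F$ with the canonical (identity) gluing, and identify the boundary connect summand $W$, after capping its boundary $\exo$ with a disc, as an interior connect summand $W\cup D$ of $T\times S^1$. The only difference is the order of the two steps (the paper first rewrites $(T\times[0,1])\,\#_b\,W$ as $(T\times[0,1])\,\#\,(W\cup D)$ and then untwists the gluing), which is immaterial.
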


\begin{proof}
 Let us take the connect sum of $T\times[0,1]$ by removing a point from its interior with the topological manifold $W\cup D$ and denote
 the resulting manifold with boundary by $\overline{W}$. We write:
 \begin{displaymath}
  \overline{W}=(T\times [0,1])\hspace{1mm}\#\hspace{1mm}(W \cup D)
 \end{displaymath}
 Note that $\overline{W}$ has two boundary components, both being the manifold $T$ and that $\overline{W}$ is homeomorphic to  
 $(T\times [0,1])\hspace{1mm}\#_b\hspace{1mm}W$. Note that there is a canonical homeomorphism between $T\hspace{1mm}\#\hspace{1mm}\exo$ and $T$.
 
 Recall that by Proposition \ref{prop:pseu} we have $\mathcal{F}$ is topologically pseudo-isotopic to the identity map.
 
 Now by Lemma \ref{lem:lem2} and the fact that $\overline{W}$ is homeomorphic to  
 $(T\times [0,1])\hspace{1mm}\#_b\hspace{1mm}W$, we can conclude that $\mathcal{M}:=(T \times [0,1] \hspace{1mm} \#_b  \hspace{1mm} W)_\mathcal{F}$
 is homeomorphic to $(\overline{W})_{id}$. 
 
 Let us note that:
\begin{displaymath}
 (\overline{W})_{id} = (T\times S^1)\hspace{1mm}\#\hspace{1mm}(W \cup D)
\end{displaymath}
 where $id:T\rightarrow T$ is the identity map. \\
 
Therefore we have proved that $\mathcal{M}$ is homeomorphic to $(T\times S^1)\hspace{1mm}\#\hspace{1mm}(W \cup D)$.
 
\end{proof}

For any manifold $N$ let $\sigma(N)$ denote the signature of the manifold.

By the proposition we proved above, for our $4k$-dimensional manifold $\cM$,
\begin{displaymath}
 \sigma(\cM)=\sigma((T\times S^1)\hspace{1mm}\#\hspace{1mm}(W \cup D))
\end{displaymath}
since the signatures of two homeomorphic manifolds are equal. 
By the properties of the signature we get, 
\begin{displaymath}
 \sigma((T\times S^1)\hspace{1mm}\#\hspace{1mm}(W \cup D))=\sigma(T\times S^1)+\sigma(W \cup D)
\end{displaymath}
\begin{displaymath}
 =\sigma(T)\sigma(S^1)+\sigma(W \cup D)
\end{displaymath}
\begin{displaymath}
 =\sigma(W\cup D)
\end{displaymath}

Also by definition:
\begin{displaymath}
 \sigma(W\cup D)=\sigma(W).
\end{displaymath}

Therefore
\begin{displaymath}
 \sigma(\cM)=\sigma(W)
\end{displaymath}

For the sake of notational brevity let us assume the following:
\begin{enumerate}
 \item $X$ will denote $(T\times S^1)\hspace{1mm}\#\hspace{1mm}(W \cup D)$
 \item $A$ will denote $T\times S^1$
 \item $C$ will denote $W \cup D$
\end{enumerate}

We should note that $X$ and $C$ are not smooth manifolds, but since we will be using the fact that the smooth manifold
$\cM$ is homeomorphic to $X$ which in turn is homeomorphic to $A\#C$ we just consider the manifolds $X$ and $C$ as topological manifolds and 
it is enough for our purposes here. 

Now let us approach $\sigma(\cM)$ in terms of its Pontryagin numbers using the Hirzebruch Signature Theorem:
$$\sigma(\cM)=\cL[\cM]=\langle \cL_k(\cM),[\cM] \rangle $$

where $\cL[\cM]$ is the $\cL$-genus of the $4k$-dimensional manifold $\cM$.
$\cL_k(\cM)$ is a polynomial in the first $k$ Pontryagin classes of $\cM$ with rational coeffcients.

Our claim now, is that all other terms except the ``leading term'' of $\cL(\cM)$ which involves $p_k(\cM)\in H^{4k}(\cM)$ all other terms
pair up with the fundamental class $[\cM]$ to produce zero. In other words except $\langle p_k(\cM),[\cM] \rangle$ all other Pontryagin numbers
$$\langle p_{\alpha_1}^{m_1}p_{\alpha_2}^{m_2}\cdots p_{\alpha_r}^{m_r}(\cM),[\cM] \rangle=0$$ 

where $\alpha_1m_1+\alpha_2m_2+\cdots +\alpha_rm_r=k$.

So we work now to verify this claim:

Recall our notation of $X, A$ and $C$, also let $D_A$ and $D_C$ are disks (homeomorphic images of $D^{4k}$) in $A$ and $C$ respectively.

We see that by Mayer Vietoris sequence,

$$\cdots \rightarrow H^{i-1}(S^{4k-1}\times I)\rightarrow H^i(X) \rightarrow H^i(A\setminus D_A)\oplus H^i(C\setminus D_C)\rightarrow H^i(S^{4k-1}\times I)
\rightarrow \cdots$$

Since,

\begin{displaymath}
 H^{i-1}(S^{4k-1}\times I)=H^i(S^{4k-1}\times I)=0\,\,\,\text{for}\,\,\, 2<i<4k-1
\end{displaymath}
we have,
\begin{displaymath}
 H^i(X)\cong H^i(A \smallsetminus D_A)\oplus H^i(C \smallsetminus D_C)\,\,\,\text{for}\,\,\, 2<i<4k-1
\end{displaymath}
It is not difficult to see that 
\begin{displaymath}
 H^i(A \smallsetminus D_A)=H^i(A)\hspace{2mm}
 \hspace{2mm}\text{for}\hspace{2mm} 2<i<4k-1
\end{displaymath}
Therefore, we have
\begin{displaymath}
 H^i(\cM)\cong H^i(X)\cong H^i(A)\oplus H^i(C\setminus D_C)\,\,\,\text{for}\,\,\, 2<i<4k-1
\end{displaymath}

Note that all Pontryagin classes of $\cM$ except $p_k(\cM)$ and $p_0(\cM)=1$ are in some $H^i(\cM)$ where $3<i<4k-3$.
Also by Novikov's Theorem proving the topological invariance of rational Pontryagin classes the above isomorphism maps:
\begin{displaymath}
 p_j(\cM)\mapsto (p_j(A),p_j(C\setminus D_C))\,\,\,\text{for}\,\,\, 1\le j \le k-1
\end{displaymath}
Since $C=W\cup D$ and $D_C$ can be chosen to be $D$, the inclusion $W \hookrightarrow W \cup D=C$ gives us the first equality in the equation below, and since $W$
is parallelizable we get the second equality:
$$p_j(C\setminus D_C)=p_j(W)=0\,\,\,\text{for}\,\,\, 1\le j \le k-1$$ 

So, $$p_j(\cM)\mapsto (p_j(A),0)\,\,\,\text{for}\,\,\, 1\le j \le k-1$$

$$\therefore \langle p_{\alpha_1}^{m_1}p_{\alpha_2}^{m_2}\cdots p_{\alpha_r}^{m_r}(\cM),[\cM] \rangle
= \langle p_{\alpha_1}^{m_1}p_{\alpha_2}^{m_2}\cdots p_{\alpha_r}^{m_r}(A),[A] \rangle$$

provided $1\le \alpha_i \le k-1$ for all $i$ which is the case when $\alpha_1m_1+\alpha_2m_2+\cdots +\alpha_rm_r=k$.

And since, $A=M\times S^1 \times S^1=\partial(M\times D^2 \times S^1)$

$$ \langle p_{\alpha_1}^{m_1}p_{\alpha_2}^{m_2}\cdots p_{\alpha_r}^{m_r}(A),[A] \rangle=0$$ as we know that all Pontryagin numbers are zero for
manifolds that bound, which gives us
$$\langle p_{\alpha_1}^{m_1}p_{\alpha_2}^{m_2}\cdots p_{\alpha_r}^{m_r}(\cM),[\cM]\rangle=0$$ when $\alpha_1m_1+\alpha_2m_2+\cdots +\alpha_rm_r=k$,
excepting the case $r=1, \alpha_1=p_k$ and $m_1=1$.

So the only surviving Pontryagin class in the $\cL_k$-polynomial of $\cM$ is $p_k$. The coeffcient $s_k$ of $p_k$ in $\cL_k$ is given in page 12 of \cite{HIR} by:
\begin{displaymath}
 s_k=\displaystyle{\frac{2^{2k}(2^{2k-1}-1)}{(2k)!}B_k}
\end{displaymath}
where $B_k$ is the $k$-th Bernoulli number.

Summarizing the above work,
\begin{equation}\label{1}
\sigma(W)=\sigma(\cM)=\displaystyle{\displaystyle{\frac{2^{2k}(2^{2k-1}-1)}{(2k)!}}B_k \langle p_k(\cM),[\cM]\rangle}
\end{equation}

Let us now state a few things about $W$ and $\sigma(W)$, which are stated and proved in \cite{MIL-KER}.

Let $\Theta^n$ be the abelian group of oriented diffeomorphism class of $n$-dimensional homotopy spheres with the connected sum operation. Then, the oriented
diffeomorphism classes of $n$-dimensional homotopy spheres that bound a parallelizable manifold form a subgroup of $\Theta^n$ and will be denoted by 
$\Theta^n(\partial\pi)$.

The following theorems from \cite{MIL-KER} shall be used shortly:
\begin{thm}
 The group $\Theta^n(\partial\pi)$ is a finite cyclic group. Moreover if $n=4k-1$, then 
 \begin{displaymath}
  \Theta^n(\partial\pi)=\mZ_{t_k} 
 \end{displaymath}
where $t_k=a_k2^{2k-2}(2^{2k-1}-1)num\displaystyle{\left(\frac{B_k}{4k}\right)}$ and $a_k=1$ or $2$ if $k$ is even or odd.
\end{thm}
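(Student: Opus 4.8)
The plan is to run the surgery-theoretic argument of Kervaire and Milnor, identifying $\Theta^n(\partial\pi)$ with the group usually written $bP_{n+1}$ of homotopy $n$-spheres bounding parallelizable $(n+1)$-manifolds. The first step is surgery below the middle dimension: given $\exo^n=\partial W$ with $W^{n+1}$ parallelizable, one performs surgeries in the interior of $W$ on embedded spheres of dimension $<(n+1)/2$ — whose normal bundles are automatically trivial because $W$ is parallelizable — to make $W$ as highly connected as its dimension allows, without altering $\partial W$; Poincar\'e--Lefschetz duality guarantees this reduces the homology all the way to the middle dimension. When $n+1$ is odd this produces a contractible $W$, so $\exo=S^n$ and $bP_{n+1}=0$; when $n+1=4k+2$ the only remaining obstruction is a $\mZ_2$-valued Kervaire invariant, so $bP_{n+1}$ is $0$ or $\mZ_2$; and when $n+1=4k$ one obtains a $(2k-1)$-connected $W$ whose middle homology $H_{2k}(W;\mZ)$ is free and carries the intersection form $\lambda$, which is unimodular because $\partial W$ is a homotopy sphere and symmetric because $4k$ is even. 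Moreover $\lambda$ is \emph{even}: a class in $H_{2k}(W)$ is represented by an embedded $S^{2k}$ with stably trivial normal bundle, and a stably trivial $2k$-plane bundle over $S^{2k}$ has even Euler number (Wall's lemma). So in every case $\Theta^n(\partial\pi)$ is finite cyclic once we settle the case $n=4k-1$.

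For $n=4k-1$, let $\sigma_k$ be the least positive signature of a closed smooth almost parallelizable $4k$-manifold, and define $\phi\colon bP_{4k}\to\mZ/\sigma_k\mZ$ by $\phi(\exo)=\sigma(W)\bmod\sigma_k$ for any parallelizable filling $W$ of $\exo$. This is well defined, since for a second filling $W'$ the closed manifold $W\cup_\exo\overline{W'}$ is almost parallelizable, whence $\sigma(W)-\sigma(W')\in\sigma_k\mZ$; it is a homomorphism because boundary connected sum adds signatures; and because the even unimodular form $\lambda$ has signature divisible by $8$, the image of $\phi$ lands in $8\mZ/\sigma_k\mZ\cong\mZ/(\sigma_k/8)$. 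Surjectivity onto this subgroup follows by plumbing $2k$ copies of the tangent disk bundle of $S^{2k}$ along the $E_8$ graph: the result is a parallelizable $W_0^{4k}$ with $\sigma(W_0)=8$ whose boundary is a homotopy sphere (here $k\ge 2$ is needed; for $k=1$ the boundary is the Poincar\'e sphere), and $\phi(\partial W_0)$ is a generator. For injectivity, if $\phi(\exo)=0$ then after boundary connected sum with copies of a closed almost parallelizable manifold of signature $\sigma_k$ one may assume $\sigma(W)=0$; making $W$ $(2k-1)$-connected, $\lambda$ is even unimodular of signature $0$, hence a sum of hyperbolic planes by the classification of indefinite even unimodular forms; surgering a Lagrangian half-basis, realized by disjoint embedded $2k$-spheres with trivial normal bundles, kills $H_{2k}(W)$ and leaves a contractible manifold whose boundary, by the $h$-cobordism theorem ($n=4k-1\ge 7$), is the standard $S^n$. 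Thus $\Theta^{4k-1}(\partial\pi)\cong\mZ_{\sigma_k/8}$.

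It then remains to evaluate $\sigma_k$. By the Hirzebruch signature theorem applied to a closed almost parallelizable $M^{4k}$, every Pontryagin class other than $p_k$ contributes nothing — this is exactly the computation already carried out for $\cM$ in this section — so $\sigma(M)=s_k\langle p_k(M),[M]\rangle$ with $s_k=\frac{2^{2k}(2^{2k-1}-1)}{(2k)!}B_k$. The last ingredient is the divisibility of $\langle p_k(M),[M]\rangle$ over closed almost parallelizable $M^{4k}$: deleting an open disk makes $M\smallsetminus D^{4k}$ parallelizable, the obstruction to extending the framing over the disk lies in $\pi_{4k-1}(SO)$, and pushing it through the $J$-homomorphism and comparing with framed cobordism shows (Milnor--Kervaire) that this integer is always divisible by $a_k(2k-1)!\,j_k$ with $j_k=|\,\mathrm{im}\,(J\colon\pi_{4k-1}(SO)\to\pi^s_{4k-1})\,|$, and that the bound is attained. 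By Adams' theorem $j_k$ is the denominator of $B_k/4k$ (the $a_k$ having been absorbed into the Bott computation $p_k=a_k(2k-1)!$ of the generator), so assembling the arithmetic gives $\sigma_k/8=a_k2^{2k-2}(2^{2k-1}-1)\,\mathrm{num}(B_k/4k)=t_k$, which one checks against the known value $bP_8=\mZ_{28}$ at $k=2$.

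The main obstacle is this final step: pinning down the exact divisibility of $p_k$ on closed almost parallelizable manifolds and matching it to the order of the image of the $J$-homomorphism rests on Adams' solution of the image-of-$J$ problem together with the Bernoulli-number computation of Milnor and Kervaire. The other genuinely delicate point is the evenness of the middle intersection form in the first step, i.e.\ the fact that a stably trivial $2k$-plane bundle over $S^{2k}$ has even Euler number; everything else reduces to standard surgery below the middle dimension, the classification of indefinite even unimodular forms, and the $h$-cobordism theorem.
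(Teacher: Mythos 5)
This statement is quoted by the paper directly from Kervaire--Milnor \cite{MIL-KER}; the paper offers no proof of its own, so there is no internal argument to compare yours against. Your sketch is, in essence, a correct reconstruction of the proof in the cited source: identifying $\Theta^{4k-1}(\partial\pi)$ with $bP_{4k}$, surgery below the middle dimension, evenness and unimodularity of the middle intersection form, the signature-mod-$\sigma_k$ homomorphism with $E_8$-plumbing giving surjectivity onto $8\mZ/\sigma_k$ and the hyperbolic-form surgery plus $h$-cobordism giving injectivity, and finally the evaluation $\sigma_k/8=t_k$ via the Hirzebruch signature theorem, the Bott divisibility $a_k(2k-1)!$, and the order of the image of $J$. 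The arithmetic you assemble does check out (e.g.\ $t_2=28$), and the two delicate points you flag are the right ones. Two small caveats worth recording: the precise value of $j_k$ as the denominator of $B_k/4k$ is not contained in the 1962 Kervaire--Milnor paper itself but rests on Adams' work on the image of $J$ (completed by the proof of the Adams conjecture), so a fully self-contained proof of the exact formula for $t_k$ needs that later input; and in the injectivity step one should note explicitly that the Lagrangian spheres have \emph{trivial} (not merely stably trivial) normal bundles --- zero self-intersection plus stable triviality over $S^{2k}$ --- and that the surgeries can be done preserving a framing, which is where the Kervaire--Milnor framed-surgery bookkeeping actually lives. Neither point is a gap in intent, but they are the places where a sketch at this level is leaning on the cited literature rather than on what is written.
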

In the above theorem and in what follows,
$num\displaystyle{\left(\frac{a}{b}\right)}$ stands for the numerator of the fraction $\displaystyle\frac{a}{b}$ in its lowest
term.

\begin{thm}\label{thm:milker}
 Let $t_k$ be the order of $\Theta^{4k-1}(\partial\pi)$. There exists a complete set of representatives 
 $\displaystyle\{S^{4k-1}=\Sigma_0,\Sigma_1, \dots, \Sigma_{t_k-1}\}$ in $\Theta^{4k-1}(\partial\pi)$ such that if $W$ is a parallelizable manifold with 
 boundary $\Sigma_i$ then $\displaystyle{\left(\frac{\sigma(W)}{8}\right)mod\, t_k=i}$.
\end{thm}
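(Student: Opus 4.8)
This statement repackages Kervaire and Milnor's computation of $\Theta^{4k-1}(\partial\pi)$; the plan is to recover it from two surgery lemmas together with the arithmetic evaluation of the order $t_k$. Begin from the plumbing construction: let $W_0$ be the $4k$-dimensional parallelizable manifold obtained by plumbing eight copies of the unit tangent disk bundle of $S^{2k}$ along the $E_8$ graph, so that $\sigma(W_0)=8$ and $\partial W_0$ is a homotopy sphere. Put $\Sigma_0:=S^{4k-1}$ and, for $i\ge 1$, let $\Sigma_i:=\partial(\natural^{\,i}W_0)$, the boundary of the boundary connected sum of $i$ copies of $W_0$, so that $\sigma(\natural^{\,i}W_0)=8i$. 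I claim these $\Sigma_i$ with $0\le i\le t_k-1$ form a complete set of representatives of $\Theta^{4k-1}(\partial\pi)$, and that $\sigma(W)/8\equiv i\pmod{t_k}$ for every parallelizable $W$ with $\partial W\cong\Sigma_i$.

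The first ingredient is that $\sigma(W)\equiv 0\pmod 8$ for any parallelizable $W^{4k}$ whose boundary is a homotopy sphere. Using parallelizability one does standard surgery below the middle dimension, rel $\partial W$, to make $W$ be $(2k-1)$-connected without changing $\partial W$ or $\sigma(W)$; then the intersection form on $H_{2k}(W;\mZ)$ is unimodular (because $\partial W$ is a homotopy sphere) and even (the Wu classes of $W$ vanish, $W$ being parallelizable), and an even unimodular symmetric form has signature divisible by $8$. So $\sigma(W)/8\in\mZ$ is defined.

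The second ingredient is that if moreover $\sigma(W)=0$ then $\partial W\cong S^{4k-1}$: after making $W$ be $(2k-1)$-connected, its even unimodular intersection form has signature $0$, hence is hyperbolic, hence is killed by successive surgeries on embedded $2k$-spheres with trivial normal bundle, yielding a contractible manifold bounding $\partial W$. Combining the two ingredients: given any parallelizable $W$ with homotopy-sphere boundary, write $\sigma(W)=8j$ and apply the second ingredient to $W\,\natural\,(-\natural^{\,j}W_0)$, which is parallelizable of signature $0$; this gives $\partial W\,\#\,(-\Sigma_j)\cong S^{4k-1}$, i.e.\ $[\partial W]=[\Sigma_j]$ in $\Theta^{4k-1}(\partial\pi)$. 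Letting $W$ vary over all such manifolds shows $\Theta^{4k-1}(\partial\pi)$ is cyclic, generated by $[\partial W_0]=[\Sigma_1]$, and that $[\Sigma_j]=0$ exactly when $8j$ is the signature of some \emph{closed} almost-parallelizable $4k$-manifold (cap $\natural^{\,j}W_0$ off with a disk). The set of such signatures is a subgroup $8t_k\mZ\subseteq 8\mZ$, so the order of $[\partial W_0]$ is $t_k$; hence $\Theta^{4k-1}(\partial\pi)\cong\mZ/t_k$, the $\Sigma_i$ with $0\le i<t_k$ are a complete set of representatives, and for $\partial W\cong\Sigma_i$ we get $[\Sigma_i]=[\partial W]=[\Sigma_{\sigma(W)/8}]$, i.e.\ $\sigma(W)/8\equiv i\pmod{t_k}$, which is the assertion.

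The one genuinely hard step — which I would quote from \cite{MIL-KER}, with \cite{HIR} for the signature theorem, rather than reprove — is the explicit evaluation $t_k=a_k2^{2k-2}(2^{2k-1}-1)\,\mathrm{num}(B_k/4k)$. Here $8t_k$ is the least positive signature of a closed almost-parallelizable $V^{4k}$; since all Pontryagin classes of $V$ below the top one vanish, Hirzebruch's formula gives $\sigma(V)=s_k\langle p_k(V),[V]\rangle$ with $s_k=\frac{2^{2k}(2^{2k-1}-1)}{(2k)!}B_k$, while obstruction theory together with the requirement that a framed homotopy sphere bound a framed manifold forces $\langle p_k(V),[V]\rangle$ to run through $a_k(2k-1)!\,j_k\,\mZ$, where $j_k$ is the order of the image of $J\colon\pi_{4k-1}(SO)\to\pi_{4k-1}^s$; thus $8t_k=s_k\,a_k(2k-1)!\,j_k$, and Adams' identification $j_k=\mathrm{denom}(B_k/4k)$ turns this into the stated closed form. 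The only places where real work hides are this arithmetic/homotopy-theoretic computation and the careful justification of the surgery steps in the borderline middle dimension; the rest is the bookkeeping above.
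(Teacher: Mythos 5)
The paper offers no proof of this statement to compare against: Theorem 3.12 is imported verbatim from Kervaire and Milnor \cite{MIL-KER} (together with Theorem 3.11 giving the value of $t_k$) and is used as a black box. Your reconstruction is the standard $bP_{4k}$ argument and is correct in outline: the $E_8$-plumbing produces the generator $\Sigma_1=\partial W_0$ with $\sigma(W_0)=8$; unimodularity plus evenness of the middle-dimensional intersection form (parallelizability kills the relevant Wu class) gives $8\mid\sigma(W)$; the ``signature zero implies standard boundary'' surgery step shows $[\partial W]$ depends only on $\sigma(W)/8$ modulo the subgroup of (eighths of) signatures of closed almost parallelizable $4k$-manifolds; and evaluating that subgroup via the Hirzebruch signature theorem and the order $j_k$ of the image of $J$ recovers $t_k=a_k2^{2k-2}(2^{2k-1}-1)\,\mathrm{num}(B_k/4k)$ --- your arithmetic $8t_k=s_k\,a_k(2k-1)!\,j_k$ checks out, and you are right to quote rather than reprove it (note that $j_k=\mathrm{denom}(B_k/4k)$ is Adams' theorem, not yet available in \cite{MIL-KER}, which had the answer only up to this input). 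Two steps you elide should at least be named: passing from ``$\partial W$ bounds a contractible manifold'' to ``$\partial W\cong S^{4k-1}$'' uses the h-cobordism theorem (legitimate here since $4k-1\ge 7$), and the surgeries killing the hyperbolic summand must be framed surgeries so that parallelizability --- hence the evenness invoked earlier --- is preserved throughout. With those citations in place your sketch delivers exactly the repackaged form the paper needs, namely explicit representatives $\Sigma_i=\partial(\natural^{\,i}W_0)$ with $\sigma(W)/8\equiv i\ (\mathrm{mod}\ t_k)$, which is if anything more explicit than the formulation in \cite{MIL-KER} itself.
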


Therfore if we choose some $\Sigma_i$ from the set of representatives in the above theorem, with $\partial W=\Sigma_i$, then
\begin{equation}\label{2}
 \sigma(W)=8(t_kd+i) \,\,\,\,\, \text{for some postive integer}\,\,\,d.
\end{equation}

Rewriting equation (\ref{1}) using the equation (\ref{2}) we get
\begin{equation}
 8(t_kd+i)=\displaystyle{\displaystyle{\frac{2^{2k}(2^{2k-1}-1)}{(2k)!}}B_k \langle p_k(\cM),[\cM]\rangle}.
\end{equation}

Now we state another result from chapter 2 of Ardanza's thesis \cite{ARD}:
\begin{thm}
 For all $k > 1$, there exists a prime $p > 2k+1$ such that $p$ divides num$\displaystyle{\left(\frac{2^{2k}(2^{2k-1}-1)}{(2k)!}B_k\right)}$.
\end{thm}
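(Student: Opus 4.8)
The plan is to locate the desired prime inside the factor $2^{2k-1}-1$ and then verify that it cannot be cancelled against the rest of the expression. Write $Q=\frac{2^{2k}(2^{2k-1}-1)}{(2k)!}\,B_k$, where, as in Milnor--Kervaire \cite{MIL-KER}, $B_k$ is the $k$-th Bernoulli number in the classical indexing, so that $B_k=|B_{2k}|$ in the modern convention. It suffices to exhibit a prime $p>2k+1$ with $v_p(Q)>0$, since then $p$ divides $\mathrm{num}(Q)$, the numerator of $Q$ in lowest terms.

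First I would produce a large prime factor of $2^{2k-1}-1$. Put $n=2k-1$; since $k>1$, $n$ is odd with $n\ge 3$, so in particular $n\notin\{1,6\}$ and the theorem of Bang--Zsygmondy applies: $2^{n}-1$ has a primitive prime divisor $p$, i.e.\ a prime dividing $2^{n}-1$ but no $2^{m}-1$ with $0<m<n$. Then the multiplicative order of $2$ modulo $p$ is exactly $n$, so $n\mid p-1$ by Fermat's little theorem; hence $p\equiv 1\pmod{n}$. As $p$ divides the odd number $2^{n}-1$ it is odd, so it cannot equal $n+1=2k$, and the next value congruent to $1$ modulo $n$ is $2n+1=4k-1$. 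Therefore $p\ge 4k-1>2k+1$, the final inequality being precisely the hypothesis $k>1$.

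It remains to check that this $p$ survives into the numerator. Since $p>2k$, the prime $p$ does not divide $(2k)!$, so $v_p\big((2k)!\big)=0$; also $v_p(2^{2k})=0$ and $v_p(2^{2k-1}-1)\ge 1$. By the von Staudt--Clausen theorem, the denominator of $B_k$ is the product of the primes $q$ with $(q-1)\mid 2k$; here $p-1\ge 4k-2>2k$, so $(p-1)\nmid 2k$ and $p$ does not divide that denominator, whence $v_p(B_k)\ge 0$. Adding valuations, $v_p(Q)=v_p(2^{2k-1}-1)+v_p(B_k)\ge 1>0$, as required. The only non-elementary ingredient is the Bang--Zsygmondy theorem, and the one point that could conceivably fail — the exceptional exponent $n=6$ — is avoided automatically because $n=2k-1$ is odd; so the main obstacle is really just invoking that theorem correctly and recording, via von Staudt--Clausen, that the denominator of $B_k$ involves only primes $\le 2k+1$ and hence cannot interfere with $p$.
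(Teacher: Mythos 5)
Your proposal is correct, but note that the paper itself does not prove this theorem at all: it is imported verbatim from Chapter 2 of Ardanza's thesis \cite{ARD}, so there is no in-paper argument to compare against. Judged on its own merits, your argument is complete. Since $n=2k-1$ is odd and $\ge 3$, the Bang--Zsygmondy exceptional cases $n=1,6$ are indeed excluded, so $2^{2k-1}-1$ has a primitive prime divisor $p$; the order of $2$ modulo $p$ is exactly $2k-1$, so $p\equiv 1 \pmod{2k-1}$, and since $p$ is odd it cannot be $2k$, forcing $p\ge 4k-1>2k+1$ exactly when $k>1$. The bookkeeping with $p$-adic valuations is also right: $p>2k$ rules out cancellation against $(2k)!$ and against $2^{2k}$, and von Staudt--Clausen (applied to $B_k=|B_{2k}|$ in the Kervaire--Milnor indexing, which you correctly flag) rules out $p$ dividing the denominator of $B_k$ because $p-1\ge 4k-2>2k$ cannot divide $2k$; hence $v_p$ of the whole expression is positive and $p$ divides the numerator in lowest terms. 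A small sanity check confirms it: for $k=2$ the quantity is $7/45$ and your $p=7>5$; for $k=3$ it is $62/945$ and $p=31>7$. Your route also fits seamlessly with how the theorem is used afterwards, since the same non-divisibility observations (no cancellation by $(2k)!$ or by the denominator of $B_k/4k$) are what drive Proposition 3.11's conclusion that such a $p$ divides $t_k$.
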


\begin{prop}\label{prop2}
 If prime $p > 2k+1$, $k>1$, is such that $p$ divides num$\displaystyle{\left(\frac{2^{2k}(2^{2k-1}-1)}{(2k)!}B_k\right)}$ then $p$ divides $t_k$.
\end{prop}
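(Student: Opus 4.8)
The plan is to unwind the formula for $t_k$ from the Kervaire--Milnor theorem recalled above, namely
$$t_k = a_k\,2^{2k-2}(2^{2k-1}-1)\,num\left(\frac{B_k}{4k}\right),\qquad a_k\in\{1,2\},$$
and to compare $p$-adic valuations. Write $B_k = N_k/D_k$ in lowest terms. The first step is to record the arithmetic of the denominators that will occur. By the von Staudt--Clausen theorem $D_k$ is squarefree and equals the product of the primes $q$ with $(q-1)\mid 2k$; every such $q$ satisfies $q\le 2k+1<p$, so $p\nmid D_k$. Likewise every prime factor of $(2k)!$ is $\le 2k<p$, and every prime factor of $4k$ is either $2$ or a divisor of $k$, hence $\le k<p$. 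Consequently $p$ divides none of $D_k$, $(2k)!$, $4k$.

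Next I would translate the hypothesis into a divisibility involving the numerator $N_k$ of $B_k$. Since
$$\frac{2^{2k}(2^{2k-1}-1)}{(2k)!}\,B_k \;=\; \frac{2^{2k}(2^{2k-1}-1)N_k}{(2k)!\,D_k}$$
and $p$ is coprime to the denominator $(2k)!\,D_k$, the $p$-adic valuation of the numerator of this fraction written in lowest terms equals $v_p\!\left(2^{2k}(2^{2k-1}-1)N_k\right)$. As $p>2$, the hypothesis $p\mid num\!\left(\frac{2^{2k}(2^{2k-1}-1)}{(2k)!}B_k\right)$ is thus equivalent to
$$p \mid (2^{2k-1}-1)\,N_k.$$
Applying the identical bookkeeping to $\frac{B_k}{4k}=\frac{N_k}{4k\,D_k}$, and using $p\nmid 4k\,D_k$, yields $v_p\!\left(num\!\left(\frac{B_k}{4k}\right)\right)=v_p(N_k)$; in particular $p\mid num\!\left(\frac{B_k}{4k}\right)$ if and only if $p\mid N_k$.

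To conclude I would split into cases according to which factor of $(2^{2k-1}-1)N_k$ the prime $p$ divides. If $p\mid 2^{2k-1}-1$, then $p$ divides that explicit factor of $t_k$. If instead $p\mid N_k$, then by the previous step $p\mid num\!\left(\frac{B_k}{4k}\right)$, which is again a factor of $t_k$. Since $p>2$ is coprime to $a_k$ and to $2^{2k-2}$, in both cases $p\mid t_k$, which is the assertion.

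The argument is short and is in essence valuation bookkeeping, so I do not anticipate a genuine obstacle; the one step that actually demands care is the first. One must invoke the von Staudt--Clausen theorem --- with the Kervaire--Milnor convention in which $B_k$ denotes the classical Bernoulli number $|B_{2k}|$ --- to be sure that $p$ cannot be hidden inside the denominator $D_k$, and one must also check that $p$ avoids $(2k)!$ and $4k$. Once these three coprimality facts are secured, the rest of the proof is forced.
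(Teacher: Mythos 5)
Your proof is correct, but it runs on a different engine than the paper's. The paper works entirely with $p$-adic exponents of the products $2^{2k}(2^{2k-1}-1)$, $X$, $4kY$, $(2k)!Y$ (where $B_k=X/Y$ in lowest terms): the hypothesis gives $m+n>s$, the elementary divisibility $4k\mid(2k)!$ gives $s\ge r$, hence $m+n>r$, which says precisely that $p$ divides $2^{2k-2}(2^{2k-1}-1)\,\mathrm{num}\left(\frac{B_k}{4k}\right)$ and therefore $t_k$ (as $p$ is odd). Crucially, the paper never needs to know whether $p$ divides the Bernoulli denominator $Y$, because $Y$ appears on both sides of the comparison and cancels out of the inequality; no von Staudt--Clausen input is required. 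You instead secure the coprimality facts $p\nmid D_k$, $p\nmid(2k)!$, $p\nmid 4k$ up front --- the first via von Staudt--Clausen --- which converts the hypothesis into the sharp statement $p\mid(2^{2k-1}-1)N_k$ and lets you finish by a clean case split according to whether $p$ sits in $2^{2k-1}-1$ or in the numerator of $B_k$. Your route buys a more transparent localization of where $p$ lives (and iff-style equivalences), at the cost of invoking a classical theorem; the paper's route is more self-contained, needing only $4k\mid(2k)!$ and valuation bookkeeping. All the auxiliary checks in your argument (each prime of $D_k$ is at most $2k+1<p$, primes of $(2k)!$ and $4k$ are below $p$, $p$ odd so the powers of $2$ and $a_k$ are harmless) are correct, so there is no gap.
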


\begin{proof}
 Let $(B_k)$ in lowest terms be $\frac{X}{Y}$.
 
 Let the exponents of $p$ in the unique prime factor decompositions of $2^{2k}(2^{2k-1}-1)$, $X$, $4kY$ and $(2k)!Y$ be $m, n, r$ and $s$ respectively.
 
 We know that, $p$ divides $num\displaystyle{\left(\frac{2^{2k}(2^{2k-1}-1)}{(2k)!}B_k\right)}$.

 Therefore, $p$ divides $num\displaystyle{\left(\frac{2^{2k}(2^{2k-1}-1)X}{(2k)!Y}\right)}$. 
 
 From this we conclude, $m+n>s$.
 
 But since $4kY$ divides $(2k)!Y$, we conclude $s>r$. Putting the two inequalities together we get, $m+n>r$. Therefore, 
 $p$ divides $2^{2k-2}(2^{2k-1}-1)num\displaystyle{\left(\frac{X}{4kY}\right)}$.
 
 Hence, $p$ divides $2^{2k-2}(2^{2k-1}-1)num\displaystyle{\left(\frac{B_k}{4k}\right)}$.
 
 This is sufficient to conclude $p$ divides $t_k$.
\end{proof}

In equation (3), $\langle  p_k,[\cM]\rangle$ is an integer since $\cM$ is
smooth, and the prime $p$ divides $t_k$ and $\displaystyle{\frac{2^{2k}(2^{2k-1}-1)}{(2k)!}}B_k$
so it has to divide $i$. But we have the freedom of choosing $\Sigma_i$ from the set of representatives in Theorem \ref{thm:milker}
such that $i$ is not divisible by $p$. We could always pick $i=1$ and that will work.

Summarizing the work done so far, we can always choose a $\Sigma_i$ and use the methods of section $2$ to get a self-diffeomorphism 
$f:M\rightarrow M$ such that $[f]\in \pi_0(\text{Diff}_0(M))$ and $[f] \ne 0$ in $\pi_0(\text{Diff}(M))$.

\section{Showing that [$f$] maps to 0 in $\pi_0(\Met^{<0}(M))$}\label{subsec:met}
The proof of Theorem $2$ of \cite{FO1} given on pages 53-54 of that paper can be seen to yield the following result whose proof (for the reader's convenience)
we sketch at the end of this section.

\begin{prop}
Fix an exotic sphere $\exo^{4k-1}$ representing an element in $\Theta_{4k-1} (k\ge 2)$ and let 
$h:S^{4k-3}\times [1,2]$ be the diffeomorphism construced from it in section $2$. Then there exists a real number $\alpha > 0$ depending only on $h$
such that the following is true. Let $(M^{4k-2}, g)$ be a closed negatively curved Riemannian manifold which contains a codimension-$0$ ball $B$ 
isometric to a ball of radius $3\alpha$ in (real) hyperbolic n-space $\mathbb{H}^n, n = 4k-2$. And let $f:M^n\rightarrow M^n$ be the self diffeomorphism
constructed using $h$ and $\alpha$ by the method given in section $2$. Then $f_*g$ and $g$ lie in the same path component of Met$^{<0}(M^n)$.
\end{prop}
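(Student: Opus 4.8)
The plan is to construct an explicit path in $\Met^{<0}(M^n)$ from $g$ to $f_*g$ by a "tapering" argument, localized in the geodesic ball $B$ where $g$ is exactly hyperbolic. First I would work in the universal cover $\mathbb{H}^n$ near the center $p$ of $B$, using geodesic polar coordinates $(x,t)\in S^{n-1}\times(0,3\alpha]$, in which the hyperbolic metric takes the warped-product form $dt^2 + \sinh^2(t)\, d\theta^2$, where $d\theta^2$ is the round metric on $S^{n-1}$. The diffeomorphism $f$ is supported in the annulus $S^{n-1}\times[\alpha,2\alpha]$ and there equals $(x,t)\mapsto(h_{t/\alpha}(x),t)$; thus $f_*g$ and $g$ differ only on this annulus, and on it $f_*g$ is obtained by pulling back the warped-product hyperbolic metric under the level-preserving map $h$. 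The key geometric input I would exploit is that $h$ arises, via the pseudo-isotopy/isotopy constructions of section~2, from a diffeomorphism $\gamma_2$ of $S^{n-1}$ that is the \emph{identity} near the two antipodal caps $N(P_1), N(P_2)$ and is \emph{smoothly isotopic to the identity} there; so on $S^{n-2}\times[1,2]$ the map $h$ is isotopic to the identity relative to the ends, and in particular the metrics $h_t^*(d\theta^2)$ on the sphere slices form a path (as $t$ varies) of metrics of the same nature.

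The core of the argument is a curvature estimate due to Farrell--Ontaneda (pp.~53--54 of \cite{FO1}) which I would quote in the following packaged form: there is a uniform $\alpha>0$, depending only on $h$ (hence only on $\exo$), such that a metric of the form $dt^2 + \varphi(t)^2\, (\psi_t)^*d\theta^2$ on $S^{n-2}\times[1,2]$ — with $\varphi$ a suitable warping function close to $\sinh$ and $\psi_t$ a path of diffeomorphisms of $S^{n-2}$ interpolating between $\mathrm{id}$ and $h_\cdot$, all within a controlled $C^2$-distance of the identity after rescaling by $1/\alpha$ — still has all sectional curvatures negative. Granting this, I would define the path $g_s$, $s\in[0,1]$, to be $g$ outside the annulus, and inside the annulus to be the "$s$-fraction" of the tapering: replace $h$ by $h$ composed with a reparametrization that slows the isotopy parameter so that at $s=0$ one recovers $g$ and at $s=1$ one recovers $f_*g$, while for each intermediate $s$ the resulting metric is a warped product of the above type. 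Choosing $\alpha$ small forces the annulus to be deep inside the region of constant curvature $-1$ and makes all the relevant derivatives of the perturbation $O(1/\alpha)$-bounded in the rescaled picture, which is exactly the regime in which the Farrell--Ontaneda estimate guarantees negativity. One then checks that $g_s$ depends smoothly on $s$ and agrees with $g$ to infinite order at the boundary of the annulus, so it patches into a genuine smooth path in $\Met^{<0}(M^n)$; this yields $f_*g \simeq g$ in $\Met^{<0}(M^n)$.

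The main obstacle — and the only place real work is needed — is verifying that negativity of the sectional curvature survives the tapering, i.e.\ reproducing the Farrell--Ontaneda curvature computation in the present setting. The point is that a warped product $dt^2+\varphi^2\eta_t$ over an interval has sectional curvatures that, in the limit of large scale (small $\alpha$), are dominated by the terms $-\varphi''/\varphi$ coming from the $\sinh$-like warping, which are bounded away from $0$; the perturbation introduced by letting the fibre metric vary with $t$ (rather than being the fixed round metric) contributes curvature terms involving the $t$-derivatives of $\eta_t$, and these are precisely the quantities one must show are small compared to the gap. This is where the uniform choice of $\alpha$ depending only on $h$ enters: one first fixes how non-round and how $t$-dependent the fibre metrics $h_t^*d\theta^2$ are (a fixed finite amount, determined by $\exo$), and then shrinks $\alpha$ until the $\sinh$-curvature dominates. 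I would also need to confirm the minor technical points that $f$ can be taken to be the identity near the boundary spheres $S^{n-1}\times\{\alpha\}$ and $S^{n-1}\times\{2\alpha\}$ (this is the reason, noted in section~2, for passing to smaller neighborhoods of $P_1,P_2$) so that the patched metric is smooth, and that everything is invariant under the hyperbolic metric's structure so that the estimate, once proved on one annulus, applies verbatim. Everything else — the existence of the isotopies, the asphericity facts, the bookkeeping with $h$ — is already in place from section~2 and from \cite{FO1}.
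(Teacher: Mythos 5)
There are two genuine problems with your plan. First, the direction of the choice of $\alpha$ is backwards. You propose to \emph{shrink} $\alpha$ "until the $\sinh$-curvature dominates," but the perturbation terms in the curvature involve the $t$-derivatives of the interpolated data, and since the interpolation is spread over an interval of length comparable to $\alpha$, those derivatives are of size $O(1/\alpha)$ --- small only when $\alpha$ is \emph{large}. This is exactly how the paper (following \cite{FO1}, \cite{FJ2}, \cite{O}) proceeds: the sectional curvatures of the interpolated metrics are shown to be $\epsilon$-close to $-1$ "provided $r$ is large enough," and this is precisely why Proposition 4.2 must pass to a finite-sheeted cover to manufacture an embedded hyperbolic ball of radius $3\alpha$ with $\alpha$ large. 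If small $\alpha$ sufficed, no covering-space step would be needed at all, which should have been a warning sign.

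Second, your interpolating path is not correctly constructed, and the place where curvature control is needed is misidentified. "Slowing the isotopy parameter" of $h$ does not produce a path of globally smooth negatively curved metrics: at an intermediate parameter the sphere-level map at the outer boundary $S^{4k-3}\times\{2\alpha\}$ is some $h_\tau$ which is not an isometry of the round metric, so the pushed-forward metric fails to agree with $g$ there and cannot be patched (and the alternative reading, squeezing the whole loop $h$ into a shrinking subinterval, gives metrics whose derivatives blow up as $s\to 0$, so the path does not converge to $g$; indeed if it did you would essentially be isotoping $f$ to the identity). The paper's mechanism is different and essential: by the lemma from \cite{FO1} the group $\mathscr{G}'$ of isotopies of the sphere that are the identity near $1$ and merely \emph{constant} (not necessarily the identity) near $2$ is contractible, giving a path $h^\mu$ from $h$ to the trivial isotopy whose end maps $\theta^\mu$ are nontrivial for intermediate $\mu$; the resulting mismatch is then absorbed on an extra outer collar $[2\alpha,3\alpha]$ by the tapered metric $\sinh^2(t)\bigl(\delta(t/\alpha)(\theta^\mu)_*\overline{h}+(1-\delta(t/\alpha))\overline{h}\bigr)+dt^2$. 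On $[\alpha,2\alpha]$ each $g^\mu$ is a pushforward of the hyperbolic metric by a diffeomorphism, hence exactly hyperbolic and needs no estimate; the only curvature verification occurs on $[2\alpha,3\alpha]$, and it is there that largeness of $\alpha$ is used. Your proposal places the estimate on the wrong region and lacks the device (free-endpoint isotopies plus the outer taper) that makes the path exist at all.
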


One also easily concludes the following result by the discussion given in section $0$ of \cite{FJ3}, page 58.

\begin{prop}
Given a closed complex hyperbolic manifold $(N^n, g_\mathbb{C})$ of real dimension $n$ and a real number $r\ge1$, there exists
a finite sheeted cover $\widetilde{N^n}$ of $N^n$ and a (special) negatively curved Riemannian metric $g_s$ on $\widetilde{N^n}$ such that
$(\widetilde{N^n}, g_s)$ contains an embedded codimension-$0$ ball which is isometric to a ball of radius $3r$ in $\mathbb{H}^n$ and outside of a
concentric embedded geodesic ball of radius $9r^2$ the Riemannian metrics $g_s$ and $\widetilde{g_\mathbb{C}}$ coincide. Here $g_\mathbb{C}$ denotes
the complex hyperbolic metric on $N^n$ and $\widetilde{g_\mathbb{C}}$ is the induced complex hyperbolic metric on the covering space $\widetilde{N^n}$.
\end{prop}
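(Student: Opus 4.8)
The plan is to pass to a sufficiently large finite cover, import the warped‑product deformation of the complex hyperbolic metric carried out in \S0 of \cite{FJ3}, and then patch. Since $N^n$ is closed, $\pi_1(N^n)$ is a finitely generated linear group (a cocompact lattice in $SU(m,1)$, $2m=n$), hence residually finite by Malcev's theorem, so $N^n$ admits finite sheeted covers of arbitrarily large injectivity radius; choose one, $\widetilde{N^n}\to N^n$, whose injectivity radius with respect to $\widetilde{g_{\mathbb{C}}}$ exceeds $10r^2$, and fix $p\in\widetilde{N^n}$. Then the geodesic ball $B(p,10r^2)$ embeds and is isometric to a concentric ball of the same radius in $\mathbb{CH}^m$; in geodesic polar coordinates about its center, write this ball's metric as the doubly warped expression
\[
\widetilde{g_{\mathbb{C}}}=d\rho^2+\phi_0(\rho)^2\,\eta^2+\psi_0(\rho)^2\,g_0,
\]
where $\rho$ is the radial parameter, $\eta$ is the standard contact form on the geodesic sphere $S^{n-1}$, $g_0$ is the pull‑back of the Fubini--Study metric under the Hopf map $S^{n-1}\to\mathbb{CP}^{m-1}$, and, up to normalization, $\psi_0(\rho)=\sinh\rho$ and $\phi_0(\rho)=\sinh\rho\cosh\rho$; the \emph{real} hyperbolic metric of constant curvature $-1$ has exactly this form but with $\phi=\psi=\sinh\rho$.

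The substantive step, which is precisely what the discussion on page 58 of \cite{FJ3} supplies, is to produce smooth warping functions $\phi,\psi\colon[0,\infty)\to[0,\infty)$ with: (i) $\phi(\rho)=\psi(\rho)=\sinh\rho$ for $\rho\le 3r$; (ii) $\phi=\phi_0$ and $\psi=\psi_0$ for $\rho\ge 9r^2$, with all derivatives matching at $\rho=9r^2$; (iii) the odd vanishing behavior at $\rho=0$ that makes $g_s:=d\rho^2+\phi^2\eta^2+\psi^2g_0$ smooth at the center (automatic, since $\sinh\rho$ already has it); and (iv) all sectional curvatures of $g_s$ strictly negative throughout $B(p,9r^2)$. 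The sectional curvatures of such a metric are explicit algebraic expressions in $\phi,\phi',\phi'',\psi,\psi',\psi''$ and the curvature of $g_0$, so (iv) amounts to a controlled ODE surgery on the pair $(\phi,\psi)$ interpolating between the real and complex hyperbolic profiles; the quadratic gap between $3r$ and $9r^2$ (legitimate because $r\ge1$) is the radial room one needs so that the transition is slow enough that the ``acceleration'' terms $\phi''/\phi$, $\psi''/\psi$ and the mixed curvature terms never turn a sectional curvature nonnegative. This curvature bookkeeping is the one part of the argument I would not redo from scratch, taking it instead from \cite{FJ3}; it is also the main obstacle.

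Granting such $(\phi,\psi)$, define $g_s$ on $\widetilde{N^n}$ to equal $d\rho^2+\phi^2\eta^2+\psi^2g_0$ on $B(p,9r^2)$ and to equal $\widetilde{g_{\mathbb{C}}}$ on the complement; the infinite‑order matching at $\rho=9r^2$ makes $g_s$ globally smooth, it is negatively curved everywhere (inside $B(p,9r^2)$ by (iv), outside because it agrees with $\widetilde{g_{\mathbb{C}}}$), and on $B(p,3r)$ it is by (i) the constant curvature $-1$ metric, so $B(p,3r)$ is isometric to the radius‑$3r$ ball in $\mathbb{H}^n$. Finally, one checks that $B(p,9r^2)$ is genuinely a geodesic ball of $(\widetilde{N^n},g_s)$ and $B(p,3r)$ the metric ball it claims to be: radial lines from $p$ are $g_s$‑geodesics, there are no conjugate points (negative curvature), and since the deformation is supported in $B(p,9r^2)$ while the cover has $\widetilde{g_{\mathbb{C}}}$‑injectivity radius above $10r^2$, no competing or looping geodesic can be short enough to interfere. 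Taking $g_s$ to be this metric — the promised ``special'' metric — completes the argument.
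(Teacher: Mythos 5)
Your proposal is essentially the paper's argument: the paper simply cites the discussion in section~0, page~58 of \cite{FJ3}, and what you have written out --- passing to a finite cover of large injectivity radius via residual finiteness, then performing the warped-product interpolation between the real and complex hyperbolic profiles on a large ball, with the curvature estimates (the quadratic amount of radial room needed to keep all sectional curvatures negative) imported from \cite{FJ3} --- is exactly that construction made explicit. Since you defer the same key curvature bookkeeping to \cite{FJ3} that the authors do, there is no substantive divergence and the argument is correct as proposed.
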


We now construct the manifold $M^n (n=4k-2, k\ge 2)$ posited to exist in Theorem $1.1$. Let $\exo^{4k-1}$ be the exotic sphere $\exo$ of Theorem $3.11$ and 
$N^{4k-2}$ be any closed complex hyperbolic manifold of real dimension $n=4k-2$. Let $\alpha > 0$ be the real number determined in Proposition 4.1 by $\exo^{4k-1}$.
Then $M^n$ is the finite sheeted cover $\widetilde{N^n}$ of $N^n$ posited to exist in Proposition 4.2 using $N^n$ and setting $r=\alpha$. And Proposition 4.2
also furnishes $M^n$ with a special negatively curved Riemannian metric $g_s$ to which Proposition 4.1 can be applied. 

Now if we choose $g_s$ as the basepoint of Met$^{<0}(M)$ and look at the exact sequence in section $1$ (under idea of proof), 
since $f_*g_s$ and $g_s$ are in the 
same path component of Met$^{<0}(M)$, the isotropy class $[f]$ of $f$ pulls back to an element $a\in \pi_1(\cT^{<0}(M),[g])$. And this element $a$ cannot
be the identity element since $[f]\ne[id_M]$, by Theorem $3.12$ and the discussion following it.

\begin{rem}
We do not know whether or not the equivalence classes of the
Riemannian metrics $\tilde{g}_\mathbb{C}$ and $g_s$ from Proposition 4.2 lie in the
same arc component of $\cT^{<0}(\widetilde{N^n})$. We hope to settle this
question in a future paper.
\end{rem}

{\emph{Proof of Proposition 4.1.}}

Denote by $B\subset M$ the closed geodesic ball centered at $p$ of radius $3\alpha$. 
Let us also denote the metric $g$ on $M$ restricted to $B$ as $g^0$, where $g^0$ is the hyperbolic metric. 
As before we identify $B\smallsetminus{p}$ with $S^{4k-3}\times(0,3\alpha]$. This idenification can be done isometrically: $B\smallsetminus{p}$ with metric
$g^0$ is isometric to $S^{4k-3}\times(0,3\alpha]$ with metric $\sinh^2(t)\overline{h}+dt^2$, where $\overline{h}$ is the Riemannian metric on the 
sphere $S^{4k-3}$ with constant curvature equal to $1$. In view of this identification, we write
\begin{displaymath}
 g^0(x,t)=\sinh^2(t)\overline{h}+dt^2.
\end{displaymath}Here we

Recall that,\newline
\centerline {$\displaystyle f(q)  = \begin{cases} 
      q & \text{if } q \notin (S^{4k-3}\times[\alpha,2\alpha])\subset M \\
      (h_{t/\alpha}(x),t) & \text{if } q=(x,t) \in (S^{4k-3}\times[\alpha,2\alpha]) \subset M \\
     \end{cases}$}\\\\
where $h(x,s)=(h_s(x),s)$.\newline
Also recall that, $h:S^{4k-3}\times [1,2]\rightarrow S^{4k-3}\times [1,2]$ is level preserving, and identity near $1$ and $2$.

Therefore, the metric $g^1=f_*g^0$ (the push forward of $g^0$ by $f$) on $B\smallsetminus {p}$ is given by:
\begin{displaymath}
  {\displaystyle g^1(x,t)  = \begin{cases} 
      g^0(x,t) & \text{if } t \notin [\alpha,2\alpha] \\
      h_*g^0(x,t) & \text{if } t \in [\alpha,2\alpha]. \\
     \end{cases}}\\\
\end{displaymath}

Let us state a lemma from \cite{FO1}.
\begin{lemma}
 Let $\mathscr G' \subset \text{Diff}_0(S^{n-1}\times [1,2])$ be the group of all smooth isotopies $h$ of the $(n-1)$-dimensional sphere $S^{n-1}$ that are the 
 identity near $1$ and constant near $2$. Then $\mathscr G'$ is contractible.
\end{lemma}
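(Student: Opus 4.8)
The plan is to regard $\mathscr{G}'$ as a based path space and to exhibit an explicit deformation retraction of it onto the constant path. An element $h\in\mathscr{G}'$ is nothing but a smooth path $t\mapsto h_t$ in $\text{Diff}_0(S^{n-1})$ that equals the constant path $\mathrm{id}$ on $[1,1+\e]$ and is constant on $[2-\e,2]$ for some $\e=\e(h)>0$; the assignment $h\leftrightarrow(t\mapsto h_t)$ is a homeomorphism for the $C^\infty$ topologies, and conversely any such path produces a level-preserving self-diffeomorphism $(x,t)\mapsto(h_t(x),t)$ of $S^{n-1}\times[1,2]$ that is automatically isotopic to the identity (slide the collar), so the containment $\mathscr{G}'\subset\text{Diff}_0(S^{n-1}\times[1,2])$ is harmless. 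Thus it suffices to contract the space of smooth paths with sitting instants at both ends, and this I will do directly.

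Fix once and for all a smooth nondecreasing $\theta\colon[1,2]\to[1,2]$ with $\theta\equiv 1$ on $[1,4/3]$ and $\theta\equiv 2$ on $[5/3,2]$. For a smooth $\lambda\colon[1,2]\to[1,2]$ and $h\in\mathscr{G}'$ let $R_\lambda(h)$ denote the isotopy $t\mapsto h_{\lambda(t)}$; for each $\lambda$ this is again a smooth level-preserving self-diffeomorphism of $S^{n-1}\times[1,2]$ (its $t$-slices are diffeomorphisms of $S^{n-1}$, even when $\lambda$ is not injective), and $(\lambda,h)\mapsto R_\lambda(h)$ is continuous. Now set $\theta_u=(1-u)\,\mathrm{id}+u\,\theta$ and $\sigma_v=1+(1-v)(\theta-1)$ for $u,v\in[0,1]$; these are smooth self-maps of $[1,2]$ with $\theta_0=\mathrm{id}$, $\theta_1=\theta=\sigma_0$, $\sigma_1\equiv 1$. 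Define $G\colon[0,1]\times\mathscr{G}'\to\mathscr{G}'$ by $G(s,\cdot)=R_{\theta_{2s}}$ for $s\in[0,1/2]$ and $G(s,\cdot)=R_{\sigma_{2s-1}}$ for $s\in[1/2,1]$. The two formulas agree at $s=1/2$, $G(0,h)=R_{\mathrm{id}}(h)=h$, and $G(1,h)_t=h_{\sigma_1(t)}=h_1=\mathrm{id}$ for every $t$; continuity of $G$ follows from continuity of $R$ and the affine dependence of $\theta_{2s},\sigma_{2s-1}$ on $s$. Hence $G$ is a homotopy from $\mathrm{id}_{\mathscr{G}'}$ to the constant map at the constant isotopy, provided all its values lie in $\mathscr{G}'$.

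Verifying that each $G(s,h)$ does lie in $\mathscr{G}'$ is the only real content, and the ``constant near $2$'' clause is the step I expect to be the main obstacle. The ``identity near $1$'' clause is routine: $\theta_u(1)=\sigma_v(1)=1$, so by continuity the reparametrizing map sends a right-neighbourhood of $1$ into $[1,1+\e(h)]$, where $h$ is already the identity (and $\sigma_v\equiv 1$ on $[1,4/3]$ makes the second half immediate). For ``constant near $2$'' a naive contraction such as $t\mapsto h_{(1-s)+st}$ fails, because for $s<1$ its image near $t=2$ is an honest subinterval of $[1,2]$ on which $h$ need not be constant; indeed the contraction is forced to change the constant value of the isotopy near $2$ from $h_2$ to $\mathrm{id}$, and no diffeomorphism of $[1,2]$ can achieve that. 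Flattening $\theta$ near $t=2$ is exactly what circumvents this. In the first half $\theta_u(2)=2$, so a small left-neighbourhood of $2$ is carried into $[2-\e(h),2]$, where $h$ is constant, and $R_{\theta_u}(h)$ is constant $=h_2$ there. In the second half $\sigma_v\equiv 2-v$ on $[5/3,2]$, so $R_{\sigma_v}(h)$ is the constant diffeomorphism $h_{2-v}$ near $t=2$ --- legitimate, since ``constant near $2$'' does not demand the constant value be $\mathrm{id}$, only that it be constant; it becomes $\mathrm{id}$ precisely when $v=1$. With these checks in hand $G$ is the desired deformation retraction, so $\mathscr{G}'$ is contractible. (The argument uses nothing about $S^{n-1}$ or about diffeomorphisms beyond $h_1=\mathrm{id}$ and the two sitting-instant conditions; it is the general fact that a based path space with sitting instants is contractible.)
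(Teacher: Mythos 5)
Your proof is correct. The paper itself gives no argument for this lemma --- it is quoted from Farrell--Ontaneda \cite{FO1} --- so there is no in-text proof to compare against; your two-stage reparametrization is the standard contraction of a based path space with sitting instants and is sound. You correctly isolate the one delicate point, namely that the naive contraction $t\mapsto h_{1+s(t-1)}$ destroys the ``constant near $2$'' condition, and your fix --- first deforming the parametrization to one that is flat near both endpoints, then sliding it down to the constant parametrization, noting that ``constant near $2$'' does not require the constant value to be the identity --- together with the routine checks (each slice $h_{\lambda(t)}$ is a diffeomorphism even for non-injective $\lambda$, membership in $\mathscr{G}'$ at every stage, and joint continuity in the smooth topology) gives a complete and self-contained proof.
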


The diffeomorphism $h:S^{4k-3}\rightarrow S^{4k-3}$ that we use for the contruction of our $f$ is an element of $\mathscr G'$.
Therefore by the above Lemma, there is a path of isotopies $h^\mu \in \mathscr G'$ for $\mu\in [0,1]$, with $h^0=h$ and $h^1= id_{S^{4k-3}\times [1,2]}$. 
Each $h^\mu$ is a smooth isotopy of the sphere $S^{4k-3}$. Let us denote the final map in the isotopy $h^\mu$ as $\theta^\mu$, that is, 
\begin{displaymath}
 h^\mu(x,2)=(\theta^\mu(x),2)
\end{displaymath}
Note that $\theta^\mu:S^{4k-3}\rightarrow S^{4k-3}$ is a diffeomorphism and $\theta^0=\theta^1=id_{S^{4k-3}}$.
Define
\begin{displaymath}
 \phi^\mu:S^{4k-3}\times[\alpha,2\alpha]\rightarrow S^{4k-3}\times[\alpha,2\alpha]
\end{displaymath}
by rescaling $h$ to the interval $[\alpha,2\alpha]$, that is, $\phi^\mu(x,t)=(h^\mu_{t/\alpha}(x),t)$.
Let $\delta:[2,3]\rightarrow [0,1]$ be smooth with $\delta(2)=1$,$\delta(3)=0$, and $\delta$ constant near $2$ and $3$. Now we are in a postion to define
a path of negatively curved metrics $g^\mu$ on $B\smallsetminus {p}=S^{4k-3}\times(0,3\alpha]$:

\begin{displaymath}
  {\displaystyle g^\mu(x,t)  = \begin{cases} 
      g^0(x,t) & \text{if } t \in (0,\alpha] \\
      (\phi^\mu)_*g^0(x,t) & \text{if } t \in [\alpha,2\alpha] \\
      \sinh^2(t)(\delta(\frac{t}{\alpha})(\theta^\mu)_*\overline{h}(x)+(1-\delta(\frac{t}{\alpha}))\overline{h}(x)) + dt^2 & \text{if } t\in [2\alpha,3\alpha]\\
     \end{cases}}\\\
\end{displaymath}

Since $\delta$ and all isotopies used are constant near the endpoints of the intervals on which they are defined, it is clear that $g^\mu$ is a smooth metric
on $B\smallsetminus {p}$ and that $g^\mu$ joins $g^1$ to $g^0$. Moreover $g^\mu(x,t)=g^0(x,t)$ for $t$ near $0$ and $3$. Hence we can extend $g^\mu$ to the whole 
manifold $M$ by defining $g^\mu(q)=g^0(q)$ for $q=p$ and $g^\mu(q)=g_s(q)$ when $q\notin B$.

The metric $g^\mu(x,t)$ is equal to $g^0(x,t)$ for $t\in(0,\alpha]$; hence $g^\mu(x,t)$ is hyperbolic for $t \in (0,\alpha]$. Also, $g^\mu(x,t)$ 
is the push-forward (by $\phi^\mu$) of the hyperbolic metric $g^0$ for $t\in [\alpha, 2\alpha]$; hence $g^\mu(x,t)$ is hyperbolic for $t\in[\alpha,2\alpha]$.
For $t\in[2\alpha,3\alpha]$, the metric $g^\mu(x,t)$ is similar to the ones constructed in \cite{FJ2} or in Theorem 3.1 in \cite{O}.

It can be checked from those references that the sectional curvatures of $g^\mu$ are $\epsilon$ close to $-1$, provided $r$ is large enough. 

Therefore, $f_*g$ and $g$ lie in the same path component of Met$^{<0}(M)$.
\bigskip


\end{document}